\numberwithin{equation}{section}
\theoremstyle{plain}
\newtheorem{theorem}{Theorem}[section]
\newtheorem{lemma}[theorem]{Lemma}
\newtheorem{corollary}[theorem]{Corollary}
\theoremstyle{definition}
\newtheorem{definition}[theorem]{Definition}
\newtheorem{example}[theorem]{Example}
\newtheorem{conjecture}[theorem]{Conjecture}
\newtheorem{assumption}[theorem]{Assumption}
\newtheorem{remark}[theorem]{Remark}
\numberwithin{equation}{section}
\numberwithin{figure}{section}
\newcommand{\dx}{\; {\rm d}x}
\DeclareMathOperator{\interior}{int}
\DeclareMathOperator{\dist}{dist}
\DeclareMathOperator{\loc}{loc}
\DeclareMathOperator{\support}{supp}
\DeclareMathOperator{\acc}{acc}
\newcommand{\subsubset}{\subset\!\subset}
\newcommand{\Partition}{\mathcal{P}}
\newcommand{\Graph}{\mathcal{G}}
\newcommand{\HGraph}{\mathcal{H}}
\newcommand{\Tree}{\mathcal{T}}
\newcommand{\VertexSet}{\mathcal{V}}
\newcommand{\EdgeSet}{\mathcal{E}}
\newcommand{\doptenergy}[1]{\mathcal L_{#1}^D}
\newcommand{\denergy}[1]{\Lambda_{#1}}
\newcommand{\ess}{\sigma_{\mathsf{ess}}}
\newcommand{\infess}{\Sigma}
\newcommand{\infspec}{\lambda}
\newcommand{\RQ}{\mathcal{R}}
\newcommand{\N}{\mathbb{N}}
\newcommand{\R}{\mathbb{R}}
\newcommand{\C}{\mathbb{C}}
\DeclareMathOperator{\supp}{supp}
\title{Spectral minimal partitions of unbounded metric graphs} 
\subjclass[2010]{34B45, 35P15, 35R02, 49Q10, 81Q35}
\keywords{Metric graph; quantum graph; locally finite graph; Laplacian; Schrödinger operator; spectral minimal partition; spectral geometry}
\author[M.~Hofmann]{Matthias Hofmann}
\author[J.~B.~Kennedy]{James B.~Kennedy}
\author[A.~Serio]{Andrea Serio}
\address{Department of Mathematics, Texas A{\&}M University, College Station, TX 77843-3368, United States of America}
\email{mhofmann@tamu.edu}
\address{Grupo de F\'isica Matem\'atica \textit{and} Departamento de Matem\'atica, Faculdade de Ci\^encias, Universidade de Lisboa, Campo Grande, Edif\'icio C6, P-1749-016 Lisboa, Portugal}
\email{jbkennedy@fc.ul.pt}
\address{Grupo de F\'isica Matem\'atica, Faculdade de Ci\^encias, Universidade de Lisboa, Campo Grande, Edif\'icio C6, P-1749-016 Lisboa, Portugal}
\email{aserio@fc.ul.pt}
\thanks{The authors wish to thank Aleksey Kostenko for some very helpful suggestions and comments regarding the literature and terminology around infinite metric graphs. J.B.K. and A.S. were supported by the Funda\c{c}\~ao para a Ci\^encia e a Tecnologia, Portugal, via the project NoDES: Nonlinear Dispersive and Elliptic Systems, reference PTDC/MAT-PUR/1788/2020 (J.B.K.), and grant UIDB/00208/2020 (both authors).}
\begin{document}
\begin{abstract}
We investigate the existence or non-existence of spectral minimal partitions of unbounded metric graphs, where the operator applied to each of the partition elements is a Schrödinger operator of the form $-\Delta + V$ with suitable (electric) potential $V$, which is taken as a fixed, underlying function on the whole graph.

We show that there is a strong link between spectral minimal partitions and infimal partition energies on the one hand, and the infimum $\infess$ of the essential spectrum of the corresponding Schrödinger operator on the whole graph on the other. Namely, we show that for any $k\in\N$, the infimal energy among all admissible $k$-partitions is bounded from above by $\infess$, and if it is strictly below $\infess$, then a spectral minimal $k$-partition exists. We illustrate our results with several examples of existence and non-existence of minimal partitions of unbounded and infinite graphs, with and without potentials.

The nature of the proofs, a key ingredient of which is a version of the characterization of the infimum of the essential spectrum known as Persson's theorem for quantum graphs, strongly suggests that corresponding results should hold for Schrödinger operator-based partitions of unbounded domains in Euclidean space.
\end{abstract}

\maketitle

\section{Introduction}
\label{sec:intro}

Our goal is to investigate the existence and non-existence of spectral minimal partitions (SMPs) of infinite graphs, and the behavior of the corresponding infimal spectral energies.

For a given $k\geq 1$, the problem of finding a spectral minimal $k$-partition typically involves minimizing, over all $k$-partitions of a given object such as a bounded domain, manifold or compact graph, a functional based on a Laplacian-type eigenvalue of each of the partition elements. They were introduced on domains around 20 years ago (see the seminal paper \cite{CoTeVe05}), and have been intensively studied in that setting ever since, often as a counterpart to the nodal partitions of the domain induced by its Laplacian eigenfunctions; the sequence of energies of the optimal partitions (i.e.\ the value of the corresponding functional on them) may then be compared with the sequence of eigenvalues of the domain. We refer to the survey \cite{BNHe17} for more details and references.

On metric graphs the study of SMPs is more recent; they were probably first introduced in \cite{BBRS12} in order to give insight into nodal partitions of Laplace eigenfunctions of the whole graph. The related study of the \emph{nodal deficit} (and \emph{nodal statistics}) of an eigenfunction, that is, the extent to which an eigenfunction of the $k$th eigenvalue fails to have $k$ nodal domains, has proved to be a rich and fruitful line of investigation \cite{ABB18,ABB22,Berkolaiko08,BCCM22}. It has also started to yield insights into domains; see in particular \cite{BCHPS22,BeCoMa19}. A general existence theory of SMPs of compact graphs was laid out only very recently in \cite{KKLM21}; further properties of SMPs and the associated minimal energies in this setting were subsequently studied in \cite{HoKe21,HKMP21}.

The principal goal of the current work is to investigate the case of \emph{unbounded} metric graphs; to the best of our knowledge this is the first time that SMPs have been studied in an unbounded setting. The usual self-adjoint realizations of the Laplacian may have essential spectrum on the whole graph, and the same may be true of the elements of a graph partition. Indeed, the presence or absence of essential spectrum, as well as eigenvalues below it, can depend intricately on metric and topological features of the graph \cite{KoNi19,So04}. The first question thus becomes exactly how to define the energy functionals; but the presence of essential spectrum also fundamentally alters the nature of SMPs and their corresponding minimal energies, as we shall see shortly.

Partially for this reason, we will also enlarge the class of operators we consider to include general Schrödinger operators of the form $-\Delta + V$, where the potential $V$ is locally integrable. On the other hand, we will restrict ourselves to the model case of Dirichlet conditions at the boundary between partition elements, as is usual in the case of domains but less general than the theory developed for compact metric graphs, together with standard (Neumann--Kirchhoff) conditions at all other vertices.

To formulate our results, we first need to introduce some notation and definitions; for more details see Section~\ref{sec:preliminaries}. Throughout, $\Graph = (\VertexSet, \EdgeSet)$ will be a metric graph consisting of a countable (possibly finite) vertex set $\VertexSet$ and a countable (possibly finite) edge set $\EdgeSet$, each edge $e$ being identified either with a compact interval $[0,\ell_e]$ or a half-line $[0,\infty)$; for our graph constructions we will take the formalism of \cite{Mu19} and \cite{KKLM21}. We consider a fixed potential $V\in L^1_{\loc}(\Graph)$, for simplicity assumed nonnegative, and take the following standing assumptions:

\begin{assumption}
\label{ass:main-assumption}
The metric graph $\Graph$ is connected and satisfies the finite ball condition: every ball of finite radius in $\Graph$ intersects a finite number of edges of $\Graph$. 
The potential $V: \Graph \to \R$ is in $L^1_{\textrm{loc}} (\Graph)$ with $V \geq 0$ almost everywhere.
\end{assumption}

This includes as a special case all graphs whose edge lengths are uniformly bounded away from zero, and in particular all finite graphs in the sense of \cite[Definition~1.3.4]{BeKu13}; such graphs were considered for instance in \cite{Ho19,KoMuNi19,KoNi19}.

For any subgraph $\HGraph \subset \Graph$ we define $\partial\HGraph$ to be its topological boundary in the metric space $\Graph$ (and \emph{not} the set of its degree one vertices). By a $k$-\emph{partition} $\Partition  = (\Graph_1,\ldots,\Graph_k)$ of $\Graph$ we understand a collection of $k \in \N$ \emph{connected} subgraphs $\Graph_1,\Graph_2,\ldots,\Graph_k$, such that, for all $i,j$, $\Graph_i \cap \Graph_j \subset \partial \Graph_i \cap \partial \Graph_j$. The partition elements, that is, the subgraphs $\Graph_i$, which as in \cite{KKLM21} we also call \emph{clusters}, may be bounded or unbounded,  and we do \emph{not} require that their union equal $\Graph$.

Given a subgraph $\HGraph \subset \Graph$, which may be equal to $\Graph$, we define the spaces $L^2(\HGraph)$ and $C(\overline{\HGraph})$ of square integrable and continuous functions in the usual way. Since $V$ will be fixed throughout, the Sobolev space $H^1(\HGraph) \subset C(\overline{\HGraph})$ will denote the space of functions for which the norm
\begin{displaymath}
	\|f\|_{H^1(\HGraph)}^2 := \|f'\|_{L^2(\HGraph)}^2 + \|(V|_\HGraph+1)^{1/2}f\|_{L^2(\HGraph)}^2
\end{displaymath}
is finite, and $H^1_0(\HGraph)$ is the space of $H^1(\HGraph)$-functions vanishing at all points in $\partial\HGraph$.

For $\HGraph$ connected, or at most a finite union of closed connected subgraphs of $\Graph$, we will consider the model Schrödinger operator $A_\HGraph : D(A_\HGraph) \subset L^2 (\HGraph) \to L^2(\HGraph)$ given by $-\Delta+ V|_\HGraph$ and satisfying Dirichlet conditions on $\partial\HGraph$ and standard (continuity and Kirchhoff) conditions at all vertices in the interior of $\HGraph$. Equivalently, this is the operator on $L^2(\HGraph)$ associated with the form
\begin{equation}\label{eq:form-intro}   
a_{\HGraph}(f,g) = \int_{\HGraph} f' \cdot \overline g' + V f \overline g, \, \mathrm dx, \qquad f,g \in H_{0}^1(\HGraph) .
\end{equation}   
(In practice we will write $V$ in place of $V|_\HGraph$.)

Treating the underlying potential $V$ as fixed and the subgraph $\HGraph$ as variable, we denote by $\sigma (\HGraph) \subset \C$ the spectrum of this operator, by $\infspec (\HGraph)$ the infimum of its spectrum, and by $\infess (\HGraph) \in \R \cup \{\infty\}$ the infinimum of its essential spectrum, $\infess (\HGraph) \geq \infspec (\HGraph)$ (see Section~\ref{sec:spectral-properties}).

Given $k \in \N$ we will always consider the $\infty$-energy functional associated with $A_\HGraph$ acting on $k$-partitions, $\denergy{k}: \Partition \mapsto \R$; that is, if $\Partition = (\Graph_1,\ldots,\Graph_k)$, then we set
\begin{equation}
\label{eq:denergy-def}
	\denergy{k} (\Partition) := \max \{ \infspec (\Graph_1),\ldots, \infspec (\Graph_k) \} \in [0,\infty).
\end{equation}
For $k \in \N$ the corresponding spectral minimization problem involves studying the quantity
\begin{equation}
\label{eq:spec-min}
	\doptenergy{k} (\Graph) = \inf \{ \denergy{k} (\Partition) : \Partition \text{ is a $k$-partition of $\Graph$} \},
\end{equation}
as was studied on compact graphs in \cite{KKLM21}, and as has been done on domains very extensively over the last two decades \cite{BNHe17}.

Our main results link this optimal energy $\doptenergy{k} (\Graph)$, the existence of a minimizer, and the infimum of the essential spectrum of the operator $-\Delta + V$ on the whole graph $\Graph$.

\begin{theorem}
\label{thm:less_than_ess}
For any $k \geq 1$, we have
\begin{equation}
\label{eq:less_or_equal_than_ess}
        \doptenergy{k}(\Graph)\leq \infess(\Graph).
\end{equation}
\end{theorem}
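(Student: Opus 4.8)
The plan is to prove the bound by exhibiting, for every $\varepsilon>0$, an admissible $k$-partition $\Partition=(\Graph_1,\dots,\Graph_k)$ with $\denergy{k}(\Partition)\le \infess(\Graph)+\varepsilon$; since $\doptenergy{k}(\Graph)$ is by \eqref{eq:spec-min} the infimum of $\denergy{k}$ over all such partitions, letting $\varepsilon\to 0$ yields \eqref{eq:less_or_equal_than_ess}. If $\infess(\Graph)=+\infty$ there is nothing to prove, so I assume $\infess(\Graph)<\infty$; then, the essential spectrum being closed, $\infess(\Graph)\in\ess(\Graph)$.

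The core of the argument is to produce a single \emph{cluster far out in the graph} with ground state energy close to $\infess(\Graph)$. Precisely, I claim that for every compact $K\subset\Graph$ there is a connected, bounded subgraph $\Graph_0$ with $\Graph_0\cap K=\emptyset$ and $\infspec(\Graph_0)<\infess(\Graph)+\varepsilon$. Granting this claim the theorem follows quickly: fixing a base point $o\in\Graph$, apply the claim with $K=\emptyset$ to obtain $\Graph_1$, then choose $R_1$ so large that $\Graph_1\subset B_{R_1}(o)$ and apply the claim with $K=\overline{B_{R_1}(o)}$ to obtain $\Graph_2$ disjoint from $\Graph_1$, and iterate $k$ times. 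The resulting $\Graph_1,\dots,\Graph_k$ are connected, relatively closed and pairwise disjoint, so $\Partition=(\Graph_1,\dots,\Graph_k)$ is an admissible $k$-partition with $\denergy{k}(\Partition)=\max_i\infspec(\Graph_i)<\infess(\Graph)+\varepsilon$, as required.

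To prove the claim I would exploit that $\infess(\Graph)\in\ess(\Graph)$: the spectral projection $\mathbb{1}_{[\infess(\Graph),\,\infess(\Graph)+\varepsilon/2]}(A_\Graph)$ has infinite rank, so one may pick an $L^2$-orthonormal sequence $(\psi_n)$ in its range. By the spectral theorem $a_\Graph(\psi_n,\psi_n)\le \infess(\Graph)+\varepsilon/2$, whence $(\psi_n)$ is bounded in $H^1(\Graph)$ and, being orthonormal, converges weakly to $0$. The finite ball condition in Assumption~\ref{ass:main-assumption} makes each embedding $H^1(\overline{B_R(o)})\hookrightarrow L^2(\overline{B_R(o)})$ compact, so $\psi_n\to 0$ in $L^2$ on every ball. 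Now I localize at infinity: with $K\subset B_R(o)$ and a Lipschitz cutoff $\eta$ equal to $0$ on $B_R(o)$ and to $1$ off $B_{R+1}(o)$, the function $\eta\psi_n$ is supported away from $K$, and a product-rule computation—using that $\eta'$ is supported on the annulus where $\psi_n\to 0$ in $L^2$, and that $V\ge 0$ together with $0\le\eta\le 1$ gives $a_\Graph(\eta\psi_n,\eta\psi_n)\le a_\Graph(\psi_n,\psi_n)+o(1)$—shows that the Rayleigh quotient of $\eta\psi_n$ tends to at most $\infess(\Graph)+\varepsilon/2$. Truncating $\eta\psi_n$ outside a large ball changes this quotient arbitrarily little (its $H^1$-tails vanish), so for suitable $n$ and truncation radius I obtain a test function $\phi$ with bounded support disjoint from $K$ and $a_\Graph(\phi,\phi)/\|\phi\|_{L^2}^2<\infess(\Graph)+\varepsilon$. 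Finally, since $\phi$ vanishes on the topological boundary of each connected component of $\{\phi\ne 0\}$, restricting $\phi$ to a component $\Graph_0$ on which its Rayleigh quotient is smallest gives $\phi|_{\Graph_0}\in H^1_0(\Graph_0)$ with $\infspec(\Graph_0)\le a_{\Graph_0}(\phi,\phi)/\|\phi\|_{L^2(\Graph_0)}^2<\infess(\Graph)+\varepsilon$; the finite ball condition guarantees there are only finitely many such components, so $\Graph_0$ is a bounded connected subgraph, proving the claim.

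The main obstacle is this localization step, i.e.\ the construction of bounded-support test functions far out in $\Graph$ whose energy approaches $\infess(\Graph)$, which is exactly where the quantum-graph version of Persson's theorem enters. The delicate points are controlling the cutoff error in the energy (handled by the compact embedding coming from the finite ball condition and by the sign condition $V\ge 0$) and ensuring the final cluster can be taken connected and bounded; local compactness and the nonnegativity of $V$ are what keep these estimates clean.
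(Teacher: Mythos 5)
Your proposal is correct in its overall architecture, but it takes a genuinely different route from the paper. The paper first establishes monotonicity and (semi)continuity of $R \mapsto \infspec(\HGraph \cap B_R(x))$ and $R \mapsto \infspec(\HGraph \setminus B_R(x))$ (Lemmas~\ref{lem:monotonicity}, \ref{lem:expanding-balls-at-0}, \ref{lem:contracting-balls-at-infinity}), quotes Persson's theorem (Theorem~\ref{thm:persson}) through Lemma~\ref{lem:contracting-balls-at-infinity}, and packages everything into the ``ticket zone principle'' (Corollary~\ref{cor:ticket-zones}): for any $\lambda > \infess(\Graph)$ it produces a central ball and $k-1$ nested annuli, each with ground state energy at most $\lambda$, and takes as clusters the connected components supporting the respective ground states. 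You instead build $k$ pairwise disjoint far-out clusters one at a time, by localizing an orthonormal sequence drawn from a spectral projection of $A_\Graph$ near $\infess(\Graph)$; in effect you re-prove inside your argument the hard direction of Persson's theorem (the paper's Weyl-sequence/IMS cutoff computation) rather than invoking it as a black box. The underlying mechanism is the same, as you yourself note, but the bookkeeping differs: your version is more self-contained and avoids the continuity lemmas and Nicaise's inequality, while the paper's version produces the concentric-annulus structure that it reuses later (e.g.\ for the infinite equipartitions of Theorem~\ref{thm:greater_than_ess}).

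Two steps of your proof need repair, though both are fixable without changing the architecture. First, the spectral projection onto $[\infess(\Graph),\infess(\Graph)+\varepsilon/2]$ need \emph{not} have infinite rank for a general semibounded self-adjoint operator: the essential spectrum can consist of the single point $\infess(\Graph)$ arising as an accumulation point of discrete eigenvalues from below, in which case that projection can even be zero. The correct statement, which is what the definition of the essential spectrum gives, is that the projection onto the \emph{two-sided} open interval $(\infess(\Graph)-\varepsilon/2,\,\infess(\Graph)+\varepsilon/2)$ has infinite rank; since your argument only uses the upper bound $a_\Graph(\psi_n,\psi_n)\leq \infess(\Graph)+\varepsilon/2$, replacing the interval costs nothing. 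Second, the set $\{\phi\neq 0\}$ can have infinitely many connected components even on a single edge (the finite ball condition does not prevent oscillation of $\phi$), so ``finitely many components, take the one with smallest quotient'' is wrong as stated; but it is also unnecessary. Writing $\phi=\sum_i \phi_i$ over the at most countably many components, the supports are disjoint, so $a_\Graph(\phi)=\sum_i a_\Graph(\phi_i)$ and $\|\phi\|_{L^2}^2=\sum_i\|\phi_i\|_{L^2}^2$; hence at least one component satisfies $a_\Graph(\phi_i)\leq \RQ[\phi]\,\|\phi_i\|_{L^2}^2$, and its closure (which is automatically bounded, connected, and disjoint from $K$) is the desired cluster. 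With these two corrections your proof is complete.
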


\begin{theorem}
\label{thm:existenceprinciple}
For any $k \geq 1$, if
\begin{equation}
\label{eq:less_than_ess}
        \doptenergy{k}(\Graph) < \infess(\Graph),
\end{equation}
then there exists a $k$-partition $\Partition = (\Graph_1,\ldots,\Graph_k)$ realizing $\doptenergy{k} (\Graph)$. Moreover, for any $i=1,\ldots,k$, $\lambda (\Graph_i)$ is an isolated eigenvalue with a corresponding ground state, i.e.\ positive eigenfunction, on $\Graph_i$.
\end{theorem}

It follows immediately from the preceding two theorems that:

\begin{corollary}
\label{cor:ifandonlyif}
For any $k \geq 1$, there exists a $k$-partition realizing $\doptenergy{k} (\Graph)$ if and only if there exists a $k$-partition $\Partition$ of $\Graph$ such that
\begin{equation}
\label{eq:if_and_only_if}
    \denergy{k}(\Partition) \leq \infess (\Graph).
\end{equation}
\end{corollary}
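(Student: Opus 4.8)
The plan is to deduce the corollary directly from Theorems~\ref{thm:less_than_ess} and~\ref{thm:existenceprinciple}, proving the two implications separately and handling a borderline equality case by hand. The whole argument is essentially a combination of the two preceding theorems with the definition~\eqref{eq:spec-min} of $\doptenergy{k}(\Graph)$ as an infimum.

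For the forward implication I would suppose that some $k$-partition $\Partition^\ast$ realizes $\doptenergy{k}(\Graph)$, so that $\denergy{k}(\Partition^\ast) = \doptenergy{k}(\Graph)$. Theorem~\ref{thm:less_than_ess} gives $\doptenergy{k}(\Graph) \leq \infess(\Graph)$, and combining these yields $\denergy{k}(\Partition^\ast) \leq \infess(\Graph)$. Thus $\Partition^\ast$ is itself a partition of the type required by~\eqref{eq:if_and_only_if}, and nothing further is needed.

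For the reverse implication I would start from a $k$-partition $\Partition$ with $\denergy{k}(\Partition) \leq \infess(\Graph)$. Since $\doptenergy{k}(\Graph)$ is the infimum of $\denergy{k}$ over all admissible $k$-partitions, this immediately gives $\doptenergy{k}(\Graph) \leq \denergy{k}(\Partition) \leq \infess(\Graph)$. I then distinguish two cases. If the inequality $\doptenergy{k}(\Graph) < \infess(\Graph)$ is strict, Theorem~\ref{thm:existenceprinciple} applies at once and produces a minimizing $k$-partition. If instead $\doptenergy{k}(\Graph) = \infess(\Graph)$, then Theorem~\ref{thm:existenceprinciple} is not directly available, but here $\denergy{k}(\Partition) \leq \infess(\Graph) = \doptenergy{k}(\Graph)$, while the reverse inequality $\denergy{k}(\Partition) \geq \doptenergy{k}(\Graph)$ holds trivially because $\doptenergy{k}(\Graph)$ is a lower bound for $\denergy{k}$; hence $\denergy{k}(\Partition) = \doptenergy{k}(\Graph)$ and $\Partition$ itself realizes the infimal energy.

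The only genuinely delicate point is this boundary case $\doptenergy{k}(\Graph) = \infess(\Graph)$, in which the strict hypothesis of Theorem~\ref{thm:existenceprinciple} fails. The observation that circumvents it is that a test partition whose energy already meets $\infess(\Graph)$ must, in the equality regime, attain the infimal energy exactly, so existence of a minimizer comes for free without invoking the existence principle at all. Every remaining step is a routine manipulation of the inequalities supplied by the two preceding theorems.
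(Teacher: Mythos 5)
Your proposal is correct and follows essentially the same route as the paper's own proof: the forward direction is Theorem~\ref{thm:less_than_ess} applied to the minimizer, and the reverse direction splits into the strict-inequality case (handled by Theorem~\ref{thm:existenceprinciple}) and the equality case $\doptenergy{k}(\Graph) = \infess(\Graph)$, where the test partition is itself seen to be minimal by squeezing $\denergy{k}(\Partition)$ between $\doptenergy{k}(\Graph)$ and $\infess(\Graph)$. No gaps; this is exactly the paper's argument.
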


\begin{remark}
\label{rem:groundstates}
Examples show that if $\doptenergy{k}(\Graph) = \infess (\Graph)$, then minimizing partitions may or may not exist, and even if they do, then their clusters may or may not have ground states (Section~\ref{sec:examples}).
\end{remark}

These principles strongly recall the minimax theorem for the eigenvalues of a self-adjoint operator $A$ on an unbounded domain $\Omega \subset \R^d$ (see \cite[Theorem~4.14]{Te14}), i.e., if there exist $k$ linearly independent test functions with energy below the infimum of its essential spectrum $\infess$, then there must be $k$ eigenvalues below $\infess$.

The basic principle behind Theorem~\ref{thm:less_than_ess} is a characterization of the infimum of the essential spectrum $\infess (\Graph)$ of a graph (or any subgraph thereof) in terms of the infimum of the spectrum of the exterior of expanding balls: if we fix any root point $0 \in \Graph$ in the graph, then
\begin{equation}
\label{eq:persson-intro}
	\infess (\Graph) = \lim_{r \to \infty} \infspec (\Graph \setminus B_r(0)),
\end{equation}
where $B_r(0) = \{ x \in \Graph : \dist_\Graph (x,0) \leq r\}$. This result is valid for a wide class of operators including Schr\"odinger operators with magnetic potential; its version in Euclidean space is often called \emph{Persson's theorem} in the mathematical physics literature (see \cite[§14.4]{HiSi96} or \cite{Pe60}). Our version, Theorem~\ref{thm:persson}, is based on \cite{Ho21} (see also \cite{AkPa16}). Together with domain monotonicity results, it implies that for any $\lambda > \infess (\Graph)$ we can find an infinite sequence of nested annuli
\begin{displaymath}
	A_{R_{n},R_{n+1}} (0) := \{ x \in \Graph: R_{n} \leq \dist_\Graph (x,0) \leq R_{n+1} \}, \qquad n \in \N,
\end{displaymath}
$R_1 < R_2 < R_3 < \ldots$, such that $\infspec (A_{R_{n},R_{n+1}}) \leq \lambda$. Their existence immediately implies Theorem~\ref{thm:less_than_ess}, since for any $\lambda > \infess (\Graph)$ and any $k \in \N$ we can exhibit a $k$-partition based on these annuli whose spectral energy is no larger than $\lambda$.

The existence result of Theorem~\ref{thm:existenceprinciple}, on the other hand, requires (among other things) a very careful diagonal argument, together with existence results for compact graphs, to extract a convergent subsequence in a suitable sense from a minimizing sequence of partitions. It is here that we will make heavy use of the metric graph setting.

Nevertheless, since the characterization \eqref{eq:persson-intro} is certainly valid for Schrödinger operators in the Euclidean setting, the main results should continue to hold on domains:

\begin{conjecture}
\label{conj:domains}
Let $\Omega \subset \R^d$, $d \geq 2$, be a domain, possibly equal to $\R^d$, and let $V: \Omega \to \R$ be a sufficiently smooth potential, bounded from below. Denote the infimum of the essential spectrum of the Schrödinger operator $-\Delta+V$, with Dirichlet conditions on $\partial\Omega$ if $\Omega \neq \R^d$, by $\infess (\Omega)$. Given a $k$-partition $(\Omega_i)_{i=1}^k$ of $\Omega$ into open, connected and pairwise disjoint sets $\Omega_i$, denote by $\infspec(\Omega_i)$ the infimum of the spectrum of $-\Delta+V$ on $\Omega_i$ with Dirichlet boundary conditions, and set
\begin{displaymath}
	\doptenergy{k} (\Omega) = \inf \max_{i=1,\ldots,k} \infspec (\Omega_i),
\end{displaymath}
where the infimum is taken over all such $k$-partitions (see \cite[Section~10.1]{BNHe17}). Then:
\begin{enumerate}
\item $\doptenergy{k} (\Omega) \leq \infess (\Omega)$ for all $k \geq 1$;
\item If $\doptenergy{k} (\Omega) < \infess (\Omega)$ for some $k \geq 1$, then there exists a $k$-partition realizing $\doptenergy{k} (\Omega)$;
\item In particular, if there exists a $k$-partition whose spectral energy is less than or equal to $\infess (\Omega)$, then there exists a $k$-partition realizing $\doptenergy{k} (\Omega)$.
\end{enumerate}
\end{conjecture}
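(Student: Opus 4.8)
The plan is to transplant, essentially verbatim, the strategy used for Theorems~\ref{thm:less_than_ess} and~\ref{thm:existenceprinciple} to the Euclidean setting, the two structural ingredients being (a)~a Persson-type characterization of the bottom of the essential spectrum and (b)~domain monotonicity of the bottom of the Dirichlet spectrum. Both are classical for Schrödinger operators $-\Delta + V$ on open sets: Persson's theorem (see e.g.\ \cite{Te14}) gives
\begin{equation}
\label{eq:persson-domain}
    \infess(\Omega) = \lim_{R\to\infty} \infspec(\Omega \setminus \overline{B_R}),
\end{equation}
where on the right the infimum of the spectrum is taken with Dirichlet conditions on $\partial\Omega$ together with the newly created sphere $\partial B_R$, while domain monotonicity asserts that $\infspec$ is nonincreasing under enlargement of the Dirichlet region. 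For~(1), I would fix any $\lambda > \infess(\Omega)$ and build a sequence of disjoint annular regions of energy below $\lambda$. Since \eqref{eq:persson-domain} and monotonicity give $\infspec(\Omega \setminus \overline{B_R}) \leq \infess(\Omega) < \lambda$ for every $R$, and since $\infspec(\Omega \cap \{R < |x| < R'\}) \downarrow \infspec(\Omega\setminus\overline{B_R})$ as $R' \to \infty$ (exhaustion plus monotonicity), a greedy choice $R_1 < R_2 < \cdots$ produces annuli $\Omega_n := \Omega \cap \{R_n < |x| < R_{n+1}\}$ with $\infspec(\Omega_n) < \lambda$ for each $n$. The $k$-partition $(\Omega_1,\ldots,\Omega_k)$ then has energy $\max_{i=1}^k \infspec(\Omega_i) < \lambda$, whence $\doptenergy{k}(\Omega) \leq \infess(\Omega)$ after letting $\lambda \downarrow \infess(\Omega)$.

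Statement~(3) follows formally from (1) and (2) exactly as Corollary~\ref{cor:ifandonlyif} follows from Theorems~\ref{thm:less_than_ess} and~\ref{thm:existenceprinciple}: if some test partition has energy $\leq \infess(\Omega)$, then either $\doptenergy{k}(\Omega) < \infess(\Omega)$, in which case (2) provides a minimizer, or $\doptenergy{k}(\Omega) = \infess(\Omega)$, in which case the test partition already has energy $\leq \infess(\Omega) = \doptenergy{k}(\Omega)$ and so, being bounded below by the infimum, realizes it.

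The substance of the conjecture is therefore part~(2), the existence result under the strict inequality $\doptenergy{k}(\Omega) < \infess(\Omega)$; the strict gap is precisely what restores compactness. For any subdomain $\Omega_i \subset \Omega$ one has $\infess(\Omega_i) \geq \infess(\Omega)$ (again via \eqref{eq:persson-domain}), so each cluster of any near-optimal partition satisfies $\infspec(\Omega_i) \leq \doptenergy{k}(\Omega) < \infess(\Omega) \leq \infess(\Omega_i)$, meaning that $\infspec(\Omega_i)$ is a genuine eigenvalue below the essential spectrum, with a nonnegative $L^2$ ground state $\psi_i$. I would take a minimizing sequence of partitions $(\Omega_1^{(n)},\ldots,\Omega_k^{(n)})$ with normalized ground states $\psi_i^{(n)}$, and use Agmon-type exponential decay—valid precisely because the eigenvalues stay uniformly below $\infess(\Omega)$—to obtain tightness, ruling out loss of mass to infinity. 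A diagonal extraction over an exhaustion by balls $B_R$ then yields $\psi_i^{(n)} \rightharpoonup \psi_i$ weakly in $H^1$ and strongly in $L^2$, with each $\psi_i$ a nonzero ground state on a limiting cluster $\Omega_i$; lower semicontinuity of the Rayleigh quotient gives $\infspec(\Omega_i) \leq \liminf_n \infspec(\Omega_i^{(n)})$, so the limiting partition has energy $\leq \doptenergy{k}(\Omega)$ and hence realizes it.

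The main obstacle, and the reason we state (2) only as a conjecture, is the passage to the limit at the level of the partition geometry. On a metric graph the finite ball condition discretizes the combinatorial structure and lets the diagonal argument reconstruct the limiting clusters edge by edge; in $\R^d$ the partition elements are open sets whose boundaries may behave wildly, and one must instead control them in a topology adapted to Dirichlet problems. The natural framework is that of quasi-open sets, capacitary measures and $\gamma$-convergence, as developed for domain partitions in \cite{BNHe17} and the references therein: one would identify $\Omega_i$ with the fine support of $\psi_i$, establish that the limits remain pairwise disjoint and together constitute an admissible $k$-partition, and verify that $\gamma$-convergence transmits the eigenvalue bound to the limit. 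Making this rigorous—in particular ruling out degenerations such as clusters collapsing or merging, and handling the regularity of the shared boundaries—is the genuinely technical step, and is where the domain case is expected to be substantially harder than its graph counterpart.
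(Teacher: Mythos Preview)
This statement is a \emph{conjecture} in the paper; the authors explicitly do not attempt a proof, writing that ``since our methods will in many places be very much graph based, the domain case, which is likely to be far more technical to treat, will not be tackled here.'' There is therefore no proof in the paper to compare against. Your proposal is not a proof either, but a heuristic outline together with an honest identification of the obstruction, and in that capacity it matches exactly what the paper says: parts~(1) and~(3) should follow the graph arguments essentially verbatim via Persson's theorem and domain monotonicity, while part~(2) is where the genuine difficulty lies.

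Two minor remarks on your sketch for~(1). First, the annular shells $\Omega \cap \{R_n < |x| < R_{n+1}\}$ need not be connected (certainly not if $\Omega$ is a proper subdomain), so as in the paper's proof of Theorem~\ref{thm:less_than_ess} you must pass to a connected component on which the ground state is supported; this is harmless but should be said. Second, your appeal to Agmon decay in the sketch for~(2) is a sensible way to obtain tightness, but note that the paper's graph argument does not rely on any such decay: it instead uses the Persson characterization directly to confine the clusters (Step~1 of the proof of Theorem~\ref{thm:existenceprinciple}), which is a more robust route and would transfer to domains as well. The real issue, as you correctly isolate, is the compactness at the level of sets: on graphs the finite ball condition reduces this to a finite combinatorial problem on each ball, whereas in $\R^d$ one needs the $\gamma$-convergence/quasi-open machinery you mention, and verifying that the limit is an admissible $k$-partition (disjointness, nontriviality of each limit cluster) is exactly the step left open.
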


This paper is organized as follows. In Section~\ref{sec:preliminaries} we will discuss our assumptions and notation around metric graphs in more detail, construct the operators of interest via the associated sesquilinear forms, and discuss a number of basic spectral properties. Section~\ref{sec:spectral} is devoted to the behavior of the spectral quantities $\infspec (\HGraph)$ and $\infess (\HGraph)$ in dependence on the subgraph $\HGraph$; in particular, we will formulate auxiliary results on continuity and monotonicity with respect to the subgraph. The proofs of Theorems~\ref{thm:less_than_ess} and~\ref{thm:existenceprinciple} will then be given in Section~\ref{sec:proofs}.

In Section~\ref{sec:examples} we will conclude with a number of examples which illustrate what can happen when $\doptenergy{k} (\Graph) = \infess (\Graph)$, as mentioned in Remark~\ref{rem:groundstates}: both existence and non-existence of a spectral minimal $k$-partition is possible (Example~\ref{ex:main}); it is possible that $\doptenergy{k} (\Graph) = \infess (\Graph) > 0$ even if $V=0$ (Example~\ref{ex:v0ls}); and if a spectral minimal $k$-partition exists then the partition elements may or may not admit ground states (Example~\ref{ex:startree}).

A number of the proofs from Section~\ref{sec:spectral} will be deferred to an appendix. There, we study spectral convergence, for our type of Schrödinger operators, of a sequence of (possibly unbounded) subgraphs $\HGraph_n \subset \Graph$ to a limit subgraph $\HGraph$. This includes lower semicontinuity of $\infspec (A_{R_{n},R_{n+1}})$ with respect to the radii $R_n$ and $R_{n+1}$, although the results we formulate are a bit more general. To the best of our knowledge such convergence results are new in the setting of non-compact graphs, and may be of some independent interest.

\section{Infinite graphs and Schrödinger operators}
\label{sec:preliminaries}

\subsection{On metric graphs}

Throughout, we will assume $\Graph = (\VertexSet,\EdgeSet)$ to be a fixed metric graph satisfying Assumption~\ref{ass:main-assumption} without further comment.
Where necessary we will use the formalism of \cite{Mu19} for metric graphs. We assume without loss of generality that every edge $e \in \EdgeSet$ may be identified with a compact interval $[0,\ell_e]$; this can always be arranged by inserting degree two (``dummy'') vertices as necessary on \textit{a priori} unbounded edges. As always for Schrödinger-type operators, the resulting implicit choice of orientation of the edges will be irrelevant for our purposes. We do \emph{not} assume the set of edge lengths $\{\ell_e : e \in \EdgeSet\}$ to be bounded from above, or from below away from zero.

For any subset $\HGraph$ of $\Graph$, we denote by $\partial\HGraph$ its topological boundary in $\Graph$, which without loss of generality we assume to consist purely of (possibly dummy) vertices of $\Graph$, and by $\overline\HGraph$ its closure. Since $\Graph$ is a complete metric space, any closed subset of $\Graph$ is also a complete metric space for the induced metric. We identify any closed subset $\HGraph$ of $\Graph$ having a finite number of connected components as a \emph{subgraph} of $\Graph$; in particular, for us subgraphs are closed. In this context, we denote by $\EdgeSet(\HGraph)$ the corresponding edge set of $\HGraph$ and $\VertexSet(\HGraph) $ the corresponding vertex set of $\HGraph$.

We call a subgraph $\HGraph$ of $\Graph$ \emph{compact} if it is compact as a metric space for the induced metric. For our definition of subgraph, this is equivalent to $\HGraph$ intersecting a finite number of edges of $\Graph$. In particular, $\Graph$ is itself compact if and only if it has a finite number of edges, each of finite length; cf.\ \cite[Definition~1.3.4]{BeKu13}.

\begin{definition}
\label{def:balls}
Fix $x \in \Graph$, and denote by $\dist_\Graph(x,y)$ the canonical Euclidean distance in $\Graph$ between $x,y \in \Graph$.
\begin{enumerate}
\item For $r>0$ we denote by
\begin{displaymath}
    B_r(x) := \{ y \in \Graph: \dist_\Graph (x,y) \leq r\}
\end{displaymath}
the \emph{closed} ball of radius $r$ centered at $x$.
\item For $r_2 \geq r_1 > 0$ we denote by
\begin{displaymath}
    A_{r_1,r_2}(x) := \acc \{y \in \Graph: r_1 \leq \dist_\Graph (x,y) \leq r_2\}
\end{displaymath}
the closed annulus of inner radius $r_1$ and outer radius $r_2$, with any isolated points removed.
\end{enumerate}
\end{definition}

Note that $B_r(x)$ is necessarily always connected, but $A_{r_1,r_2}(x)$ in general will not be.

When considering subgraphs $\HGraph$ of $\Graph$, we will always take $\dist = \dist_\Graph$ to be the distance function on $\Graph$ restricted to $\HGraph$, and \emph{not} the intrinsic distance function on $\HGraph$ as a graph.

\subsection{Function spaces, sesquilinear forms and associated operators}

Throughout, we fix a subgraph $\emptyset \neq \HGraph \subset \Graph$. This subgraph $\HGraph$ may be equal to $\Graph$. We also fix a potential $V: \Graph \to \R$ satisfying Assumption~\ref{ass:main-assumption}.

We define the function spaces $L^p (\HGraph)$, $p \in [1,\infty)$, in the usual way, as the set of all edgewise $L^p$-functions such that the $p$-series of all edgewise $L^p$-norms is finite: $f \in L^p (\HGraph)$, if
\begin{displaymath}
	\|f\|_p := \left(\sum_{e \in \EdgeSet (\HGraph)} \int_e |f|_e|^p\,\mathrm dx\right)^{1/p} < \infty;
\end{displaymath}
when $p=\infty$ we require $\|f\|_\infty := \sup_{e \in \EdgeSet (\HGraph)} \|f|_e\|_\infty$ to be finite.  We say $f \in L^p_{\loc} (\HGraph)$ is a \emph{locally finite $L^p$-function} if $f|_{B_r(x) \cap \HGraph} \in L^p (B_r(x) \cap \HGraph)$ for \emph{all} $r > 0$ and $x \in \HGraph$. The space of continuous functions $C(\HGraph)$ is defined in the usual way, with respect to the natural metric on $\HGraph$.

Define the Sobolev space
\begin{equation}\label{eq:h1}
    H^1(\Graph) := \left \{ u\in L^2(\Graph) : \int_{\Graph} |u'|^2 +  V |u|^2 \, \mathrm dx <\infty \right \}.
\end{equation}
We similarly define 
\begin{equation}\label{eq:h10}
H_0^1(\HGraph) := \{ u\in H^1(\Graph),\, \operatorname{supp} u \subset \HGraph\}
\end{equation}
for a subgraph $\HGraph$ of $\Graph$; in a slight abuse of notation we will not distinguish between functions on $\HGraph$ which vanish on $\partial\HGraph$ and their extension by zero to functions on $\Graph$. The space $H^1(\HGraph)$ is equipped with the inner product
\begin{displaymath}
    \langle f,g\rangle_{H^1} = \int_\HGraph f'\cdot \overline{g}'
    + (V|_\HGraph + 1)f\overline{g}\,\mathrm d\mu.
\end{displaymath}
We then define the sesquilinear form $a_\HGraph : H^1_0 (\HGraph) \times H^1_0 (\HGraph) \to \R$ by
\begin{equation}
\label{eq:form}
	a_\HGraph (f,g) = \int_\HGraph f'\cdot \overline{g}' + V|_{\HGraph} f\overline{g}\,\mathrm d \mu, \qquad f,g \in H^1_0 (\HGraph),
\end{equation}
 so that 
 \begin{equation}\label{eq:h1-norm}
     \|f\|_{H^1}^2 = a_\HGraph (f,f) + \|f\|_2^2.
 \end{equation} 
 We will write $a_\HGraph (f)$ in place of $a_\HGraph (f,f)$, and  we will often write $V$ in place of $V|_{\HGraph}$.
\begin{lemma}\label{lem:DensityClosed}
 Under Assumption~\ref{ass:main-assumption}, $H^1(\Graph)$ is complete and
$
     H_0^1(\HGraph)
$ 
is a closed subspace of $H^1(\Graph)$.
\end{lemma}

\begin{proof}
Consider a Cauchy sequence in $(u_n) \in H^1(\Graph)$, then $(u_n), (V^{1/2} u_n)$ are also Cauchy in $L^2(\Graph)$, and $(u_n')$ is Cauchy in $L^2(\Graph)$. Since these spaces are complete, there exist $u\in L^2(\Graph)$ and $v\in L^2(\Graph)$ such that $u_n \to u$, $V^{1/2}u_n \to V^{1/2} u$ and $u_n' \to v$ in $L^2 (\Graph)$. Moreover, for all $e \in \EdgeSet$, $u_n |_e$ admits some limit $u_e \in H^1(e)$ on that edge. The only possibility is that $v=u'$, and $u_n \to u$ in $H^1(\Graph)$.

Now suppose $\HGraph$ is a subgraph of $\Graph$ and $u_n\to u$ is a convergent sequence of $u_n\in H_0^1(\HGraph)$ in $H^1(\Graph)$. Then for every edge there exists a subsequence such that the sequence converges pointwise almost everywhere to $u$. It follows that $\operatorname{supp} u \subset \mathcal H$.  Hence $u\in H_0^1(\HGraph)$.
\end{proof}

\begin{lemma}
\label{lem:form}
Let $\HGraph$ be a subgraph of $\Graph$. Then under Assumption~\ref{ass:main-assumption}, $H^1_0 (\HGraph)$ equipped with the norm \eqref{eq:h1-norm} is a Hilbert space which is densely imbedded in $L^2(\HGraph)$. Moreover, $a_\HGraph$ is a continuous, symmetric quadratic form on $H^1_0 (\HGraph)$, and $a_\HGraph + \langle \cdot, \cdot \rangle_{L^2(\HGraph)}$ is coercive on $H^1_0 (\HGraph)$.
\end{lemma}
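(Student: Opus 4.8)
The plan is to establish the four assertions in turn, leaning on Lemma~\ref{lem:DensityClosed} for completeness and on the relation \eqref{eq:h1-norm} throughout. For the Hilbert space property, Lemma~\ref{lem:DensityClosed} already tells us that $H^1(\Graph)$ is complete and that $H^1_0(\HGraph)$ is a closed subspace of it; since a closed subspace of a Hilbert space is itself a Hilbert space under the inherited inner product, and by \eqref{eq:h1-norm} this inner product is exactly the one inducing the stated norm, this part is immediate. For the continuity of the embedding into $L^2(\HGraph)$, I would observe that $V \geq 0$ almost everywhere by Assumption~\ref{ass:main-assumption}, so $a_\HGraph(f) \geq 0$, and hence \eqref{eq:h1-norm} yields $\|f\|_2^2 \leq \|f\|_{H^1}^2$; that is, the inclusion $H^1_0(\HGraph) \hookrightarrow L^2(\HGraph)$ is continuous, indeed norm-nonincreasing.

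For density, I would exhibit a convenient dense subclass: the continuous, edgewise-$H^1$ functions that are supported on finitely many edges and vanish in a neighborhood of $\partial\HGraph$. Each such function lies in $H^1_0(\HGraph)$, because on its (compact) support $V$ is integrable (as $V \in L^1_{\loc}(\Graph)$) and the function is bounded, so $\int V|f|^2 < \infty$, while $\int |f'|^2 < \infty$ is clear, and by construction $\supp f \subset \HGraph$, matching the definition \eqref{eq:h10}. Density of this subclass in $L^2(\HGraph)$ follows by standard approximation: one first truncates an arbitrary $L^2(\HGraph)$-function to finitely many edges, using that $\sum_{e} \int_e |f|^2 < \infty$, and then on each compact edge piece approximates in $L^2$ by functions vanishing near the endpoints, which is possible precisely because $\partial\HGraph$ consists of isolated vertices and so has measure zero.

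It remains to treat the form $a_\HGraph$. Symmetry is clear since $V$ is real-valued. For continuity, two applications of the Cauchy--Schwarz inequality give
\[
|a_\HGraph(f,g)| \leq \|f'\|_2\,\|g'\|_2 + \|V^{1/2}f\|_2\,\|V^{1/2}g\|_2 \leq \sqrt{a_\HGraph(f)}\,\sqrt{a_\HGraph(g)} \leq \|f\|_{H^1}\,\|g\|_{H^1},
\]
so $a_\HGraph$ is bounded on $H^1_0(\HGraph)$. Finally, $L^2$-ellipticity, i.e.\ coercivity after an $L^2$-shift, is immediate from \eqref{eq:h1-norm}: taking the shift constant equal to $1$, we have $a_\HGraph(f) + \|f\|_2^2 = \|f\|_{H^1}^2$, which dominates $\|f\|_{H^1}^2$ with constant $1$.

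The only genuinely technical step is the density argument; the main point requiring care is verifying that the approximating cutoff functions remain in $H^1_0(\HGraph)$, for which the local integrability of $V$ on their compact supports is exactly what is needed. Everything else reduces to the closedness statement of Lemma~\ref{lem:DensityClosed}, the sign condition on $V$, and the Cauchy--Schwarz inequality.
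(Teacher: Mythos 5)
Your proof is correct and follows essentially the same route as the paper's: completeness and closedness via Lemma~\ref{lem:DensityClosed}, density via a class of compactly supported, edgewise regular functions vanishing near the boundary (checked to lie in $H^1_0(\HGraph)$ using $V\in L^1_{\loc}$) combined with edgewise $L^2$-approximation, and symmetry, continuity and $L^2$-ellipticity read off from the norm identity \eqref{eq:h1-norm}, where you merely spell out the Cauchy--Schwarz step the paper calls immediate. The only cosmetic difference is that your dense subclass requires vanishing near $\partial\HGraph$ only, while the paper's requires vanishing near every vertex of $\HGraph$; both choices support the same approximation argument.
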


Due to the symmetry of the form, we will henceforth \emph{only work with real-valued functions} and not complex extensions.

\begin{proof}
The form is closed due to Lemma~\ref{lem:DensityClosed}. For the density, consider the set
\begin{multline*}
	\{ f \in C(\HGraph): f|_e \in C^\infty (e) \text{ for all } e \in \EdgeSet(\HGraph),\,\,
	\support f \text{ is compactly contained in } \HGraph,\\ \text{and } f \equiv 0 \text{ in a neighborhood of each vertex } v \in \VertexSet(\HGraph)\}
\end{multline*}
contained in $H^1_0 (\HGraph)$. Since on any open interval $\mathcal I$ the space of smooth functions compactly supported in $\mathcal I$ is dense in $L^2 (\mathcal I)$, and $L^2(\HGraph)$ is a direct sum of spaces of the form $L^2(\mathcal I)$, an elementary approximation argument yields the density of $H^1_0 (\HGraph)$ in $L^2(\HGraph)$. That $a_\HGraph$ is continuous and $a_\HGraph + \langle \cdot, \cdot \rangle_{L^2(\HGraph)}$ is coercive is immediate, since $a_\HGraph (f) + \|f\|_{L^2(\HGraph)}^2$ coincides with the square of the norm, and $a_\HGraph$ is obviously symmetric.
\end{proof}

It follows that the associated operator, which we will denote by $A_\HGraph := -\Delta + V|_\HGraph$, is self-adjoint and bounded from below on $L^2(\HGraph)$; a routine calculation shows that $A_\HGraph$ satisfies a Dirichlet condition at all vertices of $\partial\HGraph$ and standard (continuity plus Kirchhoff) conditions at all other vertices. Whenever $\Graph$ is complete as a metric space and $V \equiv 0$, the corresponding operator is the Laplace operator as considered in \cite{EKMN18,KoNi19}, and the self-adjointness is due to \cite[Corollary~4.9]{EKMN18}. This is the case for the regular tree graphs considered in \cite{SoSo02,So04}, which we will revisit in Section~\ref{sec:examples}.

\begin{theorem}\label{thm:self-adjoint}
The self-adjoint operator $A$ associated with the closed, semibounded symmetric form $a_{\HGraph}(\cdot, \cdot)$ on $H^1_0(\HGraph)$ can be characterized via
\begin{equation}\label{eq:operator}
    \begin{aligned}
     D(A_{\HGraph}) &= \{ f\in H_0^1(\HGraph): -f'' + V f \in L^2(\mathcal H), \\
     &\qquad \qquad \qquad \sum_{e\succ v}  \frac{\partial}{\partial \nu} f_e(v) =0 \text{ for all } v\in \VertexSet(H) \setminus \partial \HGraph \}\\
    A_{\HGraph} f|_e &=  (- f'' + V f)|_e \qquad \text{for all } e \in
    \EdgeSet(\HGraph)
     \end{aligned}
\end{equation}
\end{theorem}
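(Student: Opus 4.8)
The plan is to identify the self-adjoint operator $A_\HGraph$ via the standard first representation theorem of form theory (Kato), which we may apply since Lemma~\ref{lem:form} already establishes that $a_\HGraph$ is a densely defined, closed, symmetric, semibounded form on $L^2(\HGraph)$. This immediately yields a self-adjoint, semibounded operator $A_\HGraph$ whose domain consists of those $f \in H^1_0(\HGraph)$ for which there exists $h \in L^2(\HGraph)$ with $a_\HGraph(f,g) = \langle h, g\rangle_{L^2(\HGraph)}$ for all $g \in H^1_0(\HGraph)$, and then $A_\HGraph f = h$. The entire content of the theorem is thus to \emph{translate} this abstract characterization into the concrete description \eqref{eq:operator}, namely to show that $f \in D(A_\HGraph)$ if and only if $f \in H^1_0(\HGraph)$ is edgewise $H^2$ with $-f'' + Vf \in L^2(\HGraph)$ and satisfies the Kirchhoff condition at every interior vertex, and that $A_\HGraph$ then acts edgewise as $-\tfrac{d^2}{dx^2} + V$.

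The key steps are as follows. First I would take $f \in D(A_\HGraph)$ with $A_\HGraph f = h$ and test the identity $a_\HGraph(f,g) = \langle h, g \rangle$ against functions $g$ supported in the interior of a single edge $e$; this localizes the problem and, by the usual distributional/weak-solution argument on an interval, forces $f|_e$ to solve $-f_e'' + Vf_e = h|_e$ weakly, hence $f_e \in H^2_{\loc}$ on the edge interior with $(-f'' + Vf)|_e = h|_e \in L^2(e)$, giving the edgewise action. Next, having established the edgewise regularity, I would test against $g \in H^1_0(\HGraph)$ that do \emph{not} vanish near a chosen interior vertex $v \in \VertexSet(\HGraph) \setminus \partial\HGraph$; integrating by parts edge-by-edge and using that the boundary terms at $v$ survive while the equation $-f'' + Vf = h$ cancels the bulk, one extracts the vanishing of $\sum_{e \succ v} \frac{\partial}{\partial\nu} f_e(v)$, i.e.\ the Kirchhoff condition. (Continuity of $f$ across $v$ is already built in, since $f \in H^1_0(\HGraph) \subset C(\overline{\HGraph})$ by the Sobolev embedding underlying \eqref{eq:h1}.) The Dirichlet condition on $\partial\HGraph$ is automatic from the definition of $H^1_0(\HGraph)$. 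The reverse inclusion is a direct computation: given $f$ in the right-hand side of \eqref{eq:operator}, integration by parts on each edge together with the Kirchhoff cancellation at interior vertices and the vanishing of $g$ on $\partial\HGraph$ shows $a_\HGraph(f,g) = \langle -f'' + Vf, g\rangle$ for all $g \in H^1_0(\HGraph)$, placing $f \in D(A_\HGraph)$.

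The main obstacle I expect is making the integration-by-parts argument rigorous \emph{uniformly over a possibly infinite, only locally finite graph}, where $\HGraph$ may be unbounded with infinitely many edges and vertices, and where $V \in L^1_{\loc}$ rather than being bounded. Two technical points deserve care: (i) justifying the termwise integration by parts over the infinite edge sum, which requires controlling convergence of the boundary contributions and using the density of compactly supported test functions (as exhibited in the proof of Lemma~\ref{lem:form}) to reduce to finitely many edges at a time before passing to the limit; and (ii) confirming that the merely locally integrable potential does not spoil the edgewise $H^2$-regularity — here one observes that the distributional identity $-f_e'' = h|_e - V f_e$ has right-hand side in $L^1_{\loc}$ (since $f_e$ is bounded and $V \in L^1_{\loc}$), so $f_e'$ is absolutely continuous and the pointwise normal derivatives $\frac{\partial}{\partial\nu}f_e(v)$ entering the Kirchhoff condition are well defined as one-sided limits at each vertex. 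Beyond these, the argument is the routine form-to-operator computation, and under Assumption~\ref{ass:main-assumption} the finite ball condition guarantees that each vertex has finite degree so that the sum in the Kirchhoff condition is finite and the vertex-by-vertex analysis is well posed.
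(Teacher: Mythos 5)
Your proposal is correct and follows essentially the same route as the paper's proof: both directions are established by the form-to-operator representation, localizing test functions first inside single edges to identify the edgewise action $-f''+Vf$, then near interior vertices to extract the Kirchhoff condition, and using integration by parts together with the density of compactly supported functions in $H^1_0(\HGraph)$ for the converse inclusion. If anything, you are more explicit than the paper about two technical points it glosses over --- the absolute continuity of $f'_e$ up to vertices when $V \in L^1_{\loc}$ (so that the normal derivatives are well defined) and the reduction to finitely many edges before passing to the limit --- which is a welcome addition rather than a deviation.
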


\begin{proof}
One easily verifies that $A_{\HGraph}\subset A$. Indeed, suppose $f\in D(A_{\HGraph})$ and $g\in H^1(\mathcal G)$ with compact support, then by definition of $D(A_\HGraph)$,
\begin{equation}
    a_{\HGraph}(f, g) = \int_{\mathcal H} - f'' g + V fg \, \mathrm d\mu = \langle A_{\HGraph} f, g \rangle_{L^2},
\end{equation}
and due to the density of compactly supported functions as shown in Lemma~\ref{lem:form} we infer $D(A_\HGraph)\subset D(A)$ and $A\big |_{\HGraph} = A_\HGraph$. For the other direction, by restricting to edgewise supported test functions, one sees that edgewise $Af$ satisfies
\begin{equation}
    Af =  - f'' + V f.
\end{equation}
Taking test functions $g$ locally supported in a neighborhood of a single vertex $v$ leads to 
\begin{equation}
    \left (\sum_{e\succ v} - \frac{\mathcal \partial}{\partial \nu} f_e(v) \right ) g(v) = \langle Af, g\rangle - \langle Ag, f \rangle =0
\end{equation}
for all $v\in \VertexSet(\HGraph)\setminus \partial \HGraph$. This establishes that $A\subset A_{\HGraph}$, and so there is equality.  
\end{proof}

\subsection{Spectral properties of infinite quantum graphs}
\label{sec:spectral-properties}
Denote by $\RQ_\HGraph$ the Rayleigh quotient 
\begin{equation}
\label{eq:rayleigh}
 	\RQ_{\HGraph} [f] = \frac{\int_\HGraph |f'|^2 + V|f|^2\,\dx}{\int_\HGraph |f|^2\,\dx}, \qquad 0 \neq f \in H^1_0 (\HGraph),
\end{equation}
associated with the operator $A_\HGraph$. We consider the infimum of the spectrum of $A_\HGraph$, given by
\begin{equation}
\label{eq:infspec}
	\infspec (\HGraph) = \inf_{0 \neq f \in H^1_0 (\HGraph)} \RQ_\HGraph [f] \in [0,\infty),
\end{equation}
as well as the infimum of the essential spectrum, $\infess (\HGraph) \in [\infspec (\HGraph),\infty]$. If there are any isolated eigenvalues below $\infess (\HGraph)$, we will denote them by
\begin{displaymath}
	0 \leq \infspec (\HGraph) \equiv \lambda_1 (\HGraph) \leq \lambda_2 (\HGraph) \leq \ldots < \infess (\HGraph),
\end{displaymath}
repeated according to their necessarily finite multiplicities; these are given by the usual min-max and max-min characterizations.

We finish with the characterization, mentioned in the introduction, of $\infess (\cdot)$ in terms of \emph{Persson theory}, inspired by \cite[Theorem~2.4.22]{Ho21} (but see also \cite{AkPa16}). This is motivated by a well-established theory on domains in $\R^d$; see \cite{Pe60} or \cite[§14.4]{HiSi96}. To this end we first need a kind of cut-off formula for the form $a_\HGraph$ (sometimes called the \emph{IMS localization formula}, named after Ismigilov, Morgan and Simon and also I.M. Sigal as explained in \cite[§2]{Si85}). This formula will also play a key role in the appendix.

\begin{lemma}
\label{lem:ims}
Given a subgraph $\HGraph$ of $\Graph$, the sesquilinear form $a_\HGraph(u,v)$ satisfies
\begin{equation}
\label{eq:ims}
	a_\HGraph (\phi f, \phi g) = \frac{1}{2}\Big[a_\HGraph (\phi^2 f, g) + a_\HGraph (f,\phi^2 g)\Big] + \langle |\phi'|^2 f,g\rangle_{L^2(\HGraph)}
\end{equation}
for all $f,g \in H^1_0 (\HGraph)$ and all $\phi \in C(\HGraph) \cap L^{\infty} (\HGraph)$ with $\phi' \in L^\infty(\HGraph)$.
\end{lemma}
\begin{proof}
Suppose $\phi$ is a continuous function satisfying $\phi, \phi' \in L^\infty (\HGraph)$ and $f\in H_0^1(\HGraph)$. Then $\supp(\phi f)\subset \HGraph$ and
\begin{equation}
    (\phi f)' = \phi' f  + \phi f'
\end{equation}
edgewise. This relation implies that $\phi f\in H_0^1(\HGraph)$. That $a_\HGraph$ satisfies \eqref{eq:ims} is now reduced to a formal (and standard) calculation, which can be found in \cite[Proposition~2.8]{Si85}.
\end{proof}

\begin{theorem}[Persson's theorem]
\label{thm:persson}
Suppose $\Graph$ and $V$ satisfy Assumption~\ref{ass:main-assumption} and $\HGraph$ is a subgraph of $\Graph$. Then $A_\HGraph$ satisfies
\begin{equation}
\label{eq:ims-characterization}
	\infess (\HGraph) =\sup_{\mathcal K \subsubset\HGraph}\,\,\inf_{\substack{0\neq f \in H^1_0(\HGraph)\\
	\support f \cap \mathcal K = \emptyset}}\RQ_\HGraph[f].
\end{equation}
\end{theorem}

\begin{proof}
Due to the compact imbedding of $H^1(\HGraph)$ into $L^2_{\text{loc}}(\HGraph)$ and the continuous imbedding of $D(A_\HGraph)$ into $L^2(\HGraph)$, we infer that $D(A_\HGraph)$ locally compactly imbeds into $L^2(\HGraph)$. The statement then follows immediately from \cite[Theorem~2.4.22]{Ho21}, using Lemma~\ref{lem:ims}.
\end{proof}

\section{Continuity and monotonicity properties of the infimum of the spectrum}
\label{sec:spectral}

In this section we will prove the background continuity and monotonicity properties of $\infspec (\cdot)$, as well as related properties of $\infess (\cdot)$, with respect to expanding or contracting balls. Throughout, we will continue to suppose that $\Graph$ and $V$ satisfy Assumption~\ref{ass:main-assumption}.

\begin{lemma}
\label{lem:monotonicity}
Suppose that $\Graph_2 \subset \Graph_1$ are any nested subgraphs of $\Graph$ (where we explicitly allow $\Graph_1 = \Graph$). Then
\begin{enumerate}
\item $\infspec (\Graph_2) \geq \infspec (\Graph_1)$;
\item if $\Graph_1$ has discrete spectrum, then so too does $\Graph_2$, and $\lambda_k(\Graph_2) \geq \lambda_k (\Graph_1)$ for all $k \geq 1$.
\item if $\Graph_2$ has essential spectrum, then so too does $\Graph_1$, and $\infess (\Graph_2) \geq \infess (\Graph_1)$.
\end{enumerate}
\end{lemma}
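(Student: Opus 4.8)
The plan is to reduce everything to the variational characterizations in \eqref{eq:infspec} together with the elementary inclusion of form domains. The single observation driving both parts is that $H^1_0(\Graph_2) \subset H^1_0(\Graph_1)$, which is immediate from the definition \eqref{eq:h10}: any $u \in H^1(\Graph)$ with $\supp u \subset \Graph_2 \subset \Graph_1$ automatically satisfies $\supp u \subset \Graph_1$. Moreover, for such $u$ the Rayleigh quotients agree, $\RQ_{\Graph_2}[u] = \RQ_{\Graph_1}[u]$, since $u$ vanishes on $\Graph_1 \setminus \Graph_2$ and hence the integrals defining numerator and denominator computed over $\Graph_1$ or over $\Graph_2$ coincide. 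Thus passing from $\Graph_1$ to $\Graph_2$ merely restricts the class of admissible test functions without changing the value of the functional on any of them.

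For part~(1) I would apply this directly to \eqref{eq:infspec}: as $\infspec(\HGraph)$ is the infimum of $\RQ_\HGraph$ over all nonzero $f \in H^1_0(\HGraph)$, taking the infimum over the smaller class $H^1_0(\Graph_2) \subset H^1_0(\Graph_1)$ (with matching Rayleigh quotients) can only increase it, giving $\infspec(\Graph_2) \geq \infspec(\Graph_1)$.

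For part~(2) I would work with the min-max values
\[
	\mu_k(\HGraph) := \inf_{\substack{S \subset H^1_0(\HGraph)\\ \dim S = k}} \ \sup_{0 \neq f \in S} \RQ_\HGraph[f],
\]
and invoke the Courant--Fischer theorem for semibounded self-adjoint operators, which is legitimate since $A_\HGraph$ is self-adjoint and bounded below by Lemma~\ref{lem:form}: for each $k$, either $\mu_k(\HGraph)$ equals the $k$-th eigenvalue $\lambda_k(\HGraph)$ lying below $\infess(\HGraph)$, or $\mu_k(\HGraph) = \infess(\HGraph)$ (in which case $\mu_j(\HGraph) = \infess(\HGraph)$ for all $j \geq k$). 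The same subspace-restriction argument as above, now applied to $k$-dimensional subspaces rather than single functions, yields $\mu_k(\Graph_2) \geq \mu_k(\Graph_1)$ for every $k$, since every admissible $k$-dimensional subspace in $H^1_0(\Graph_2)$ is also one in $H^1_0(\Graph_1)$. If $\Graph_1$ has discrete spectrum, then $\infess(\Graph_1) = +\infty$, so $\mu_k(\Graph_1) = \lambda_k(\Graph_1) \to +\infty$ as $k \to \infty$. Were $\Graph_2$ to carry essential spectrum, that is $\infess(\Graph_2) < +\infty$, then $\mu_k(\Graph_2) \leq \infess(\Graph_2)$ would stay bounded in $k$, contradicting $\mu_k(\Graph_2) \geq \mu_k(\Graph_1) \to +\infty$; hence $\Graph_2$ has discrete spectrum as well. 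Consequently $\mu_k(\Graph_2) = \lambda_k(\Graph_2)$ and $\mu_k(\Graph_1) = \lambda_k(\Graph_1)$ for all $k$, and the established monotonicity becomes exactly $\lambda_k(\Graph_2) \geq \lambda_k(\Graph_1)$.

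The inclusion of form domains and the agreement of Rayleigh quotients are trivial, so no genuine analytic difficulty arises there; I expect the only point requiring care to be the bookkeeping in the Courant--Fischer dichotomy, namely that the min-max values track the eigenvalues precisely up to the bottom of the essential spectrum and then saturate at $\infess(\cdot)$. This must be used correctly to transfer discreteness from $\Graph_1$ to $\Graph_2$. One should also take the min-max values over the form domain $H^1_0(\HGraph)$ rather than the operator domain. Note that connectivity of the subgraphs plays no role in this argument.
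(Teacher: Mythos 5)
Your proof is correct. Part (1) coincides with the paper's argument: extension by zero gives the inclusion $H^1_0(\Graph_2) \subset H^1_0(\Graph_1)$ with equal Rayleigh quotients, so the infimum over the smaller class dominates. For part (2), however, you take a genuinely different route. The paper transfers discreteness via the functional-analytic characterization that $\Graph_i$ has discrete spectrum if and only if the embedding $H^1_0(\Graph_i) \hookrightarrow L^2(\Graph_i)$ is compact: since $H^1_0(\Graph_2)$ is (after zero extension) a closed subspace of $H^1_0(\Graph_1)$, compactness, and hence discreteness, passes to $\Graph_2$ immediately, and the eigenvalue inequality then follows from the usual min-max characterization. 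You instead stay entirely within the variational framework: the Courant--Fischer dichotomy (each min-max value $\mu_k$ is either an eigenvalue below $\infess$ or saturates at $\infess$), combined with the monotonicity $\mu_k(\Graph_2) \geq \mu_k(\Graph_1)$ and the divergence $\lambda_k(\Graph_1) \to +\infty$, forces $\infess(\Graph_2) = +\infty$ and hence discreteness. Your route needs the form version of the min-max theorem with its saturation behavior, plus the fact that a semibounded operator with discrete spectrum on an infinite-dimensional space has $\lambda_k \to +\infty$; the paper's route needs instead the compact-embedding criterion. Both are legitimate: the paper's is shorter and avoids any eigenvalue asymptotics, while yours delivers the transfer of discreteness and the inequality $\lambda_k(\Graph_2) \geq \lambda_k(\Graph_1)$ in a single sweep from one monotonicity statement, without invoking compactness at all. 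Your closing remarks (min-max over the form domain, irrelevance of connectivity) are both apt; the paper indeed applies the lemma to disconnected subgraphs such as annuli.
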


\begin{proof}
For (1) we use that any element $u \in H^1_0 (\Graph_2)$ may be extended by zero to obtain an element of $H^1_0(\Graph_1)$ with the same Rayleigh quotient; for (2) we additionally use that $\Graph_i$ has discrete spectrum if and only if the imbedding of $H^1_0 (\Graph_i)$ into $L^2(\Graph_i)$ is compact. (3) is a direct consequence of the same argument as in (1) together with Theorem~\ref{thm:persson}.
\end{proof}

We fix any subgraph $\HGraph$ of $\Graph$ and any point $x \in \Graph$, not necessarily in $\HGraph$. For each $R>0$, we consider the subgraphs $\HGraph \cap B_R(x)$ and $\HGraph \setminus B_R(x)$. As subgraphs, these are always taken to be closed, but we define them so as not to have isolated points; thus, for example, if $\dist_\Graph (v,x) = R$ but $\dist_\Graph (y,x) > R$ for all $y \in \HGraph \setminus \{v\}$, then we take $\HGraph \cap B_R(x)$ to be the empty set instead of $\{v\}$.

\begin{lemma}
\label{lem:expanding-balls-at-0}
Under the above assumptions, let $x \in \Graph$ be fixed but arbitrary, and let $R_0 \geq 0$ be such that $\HGraph \cap B_R (x)$ has positive volume if and only if $R > R_0$. Then the operator $A_{\HGraph \cap B_R (x)}$ has discrete spectrum, and the mapping
\begin{equation}
\label{eq:expanding-ball}
	R \mapsto \infspec (\HGraph \cap B_R(x)) \in \R \cup \{\infty\}
\end{equation}
is a lower semicontinuous and monotonically decreasing function of $R \in (R_0,\infty)$, which satisfies
\begin{displaymath}
	\lim_{R \to R_0} \infspec (\HGraph \cap B_R(x)) = \infty, \qquad \lim_{R\to \infty} \infspec (\HGraph \cap B_R(x)) = \infspec (\HGraph).
\end{displaymath}
\end{lemma}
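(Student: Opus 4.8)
The plan is to treat the four assertions --- discreteness, monotone decrease, lower semicontinuity, and the two limits --- largely separately, leaning on Lemma~\ref{lem:monotonicity} and the finite ball condition for the first two. Discreteness is immediate: under Assumption~\ref{ass:main-assumption} the ball $B_R(x)$ meets only finitely many edges, so $\HGraph \cap B_R(x)$ is a compact metric graph, and the embedding $H^1_0(\HGraph \cap B_R(x)) \hookrightarrow L^2(\HGraph \cap B_R(x))$ is compact (via $H^1 \hookrightarrow C$ on each of the finitely many finite edges); since $V \geq 0$ only strengthens the $H^1$-norm, the same holds for the weighted space, whence $A_{\HGraph \cap B_R(x)}$ has compact resolvent and discrete spectrum. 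Monotone decrease is exactly Lemma~\ref{lem:monotonicity}(1) applied to the nesting $\HGraph \cap B_{R'}(x) \subset \HGraph \cap B_R(x)$ for $R' < R$. Because $R \mapsto \infspec(\HGraph \cap B_R(x))$ is thereby monotonically decreasing, I would reduce its lower semicontinuity to right-continuity: for a decreasing function these are equivalent, since $\liminf_{s \to R} \infspec(\HGraph \cap B_s(x)) = \lim_{s \downarrow R}\infspec(\HGraph \cap B_s(x)) \leq \infspec(\HGraph \cap B_R(x))$.

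For right-continuity, fix $R > R_0$ and a sequence $R_n \downarrow R$; write $\HGraph_n := \HGraph \cap B_{R_n}(x)$ and $\HGraph_R := \HGraph \cap B_R(x)$. Since $\infspec$ is decreasing, $\infspec(\HGraph_n) \leq \infspec(\HGraph_R)$, and it remains to prove $\liminf_n \infspec(\HGraph_n) \geq \infspec(\HGraph_R)$. Let $u_n \in H^1_0(\HGraph_n)$ be a normalized ground state (available by discreteness), so $a_{\HGraph_n}(u_n) = \infspec(\HGraph_n) \leq \infspec(\HGraph_R)$ and hence $\|u_n\|_{H^1}$ is uniformly bounded. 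All $u_n$ are supported in the fixed compact graph $\HGraph_1$; extracting a subsequence, $u_n \rightharpoonup u$ weakly in $H^1_0(\HGraph_1)$ and, by the compact embedding, $u_n \to u$ strongly in $L^2$, so $\|u\|_2 = 1$. The crucial point is that $\support u \subset \HGraph_R$: for each $\delta>0$ one has $\support u_n \subset \HGraph \cap B_{R+\delta}(x)$ once $R_n \leq R + \delta$, so the $L^2$-limit satisfies $\support u \subset \bigcap_{\delta>0}(\HGraph \cap B_{R+\delta}(x)) = \HGraph_R$, using that $B_R(x)$ is closed. Thus $u \in H^1_0(\HGraph_R)$, and weak lower semicontinuity of $a$ (of both the Dirichlet seminorm and the nonnegative potential term under weak $H^1$-convergence) gives $\infspec(\HGraph_R) \leq \RQ_{\HGraph_R}[u] = a_{\HGraph_R}(u) \leq \liminf_n a_{\HGraph_n}(u_n) = \liminf_n \infspec(\HGraph_n)$, as required. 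I expect this support-localization combined with the weak/strong compactness to be the main obstacle, as it is where the geometry of the shrinking annulus and the semicontinuity of the form must be combined with care.

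For the limit as $R \downarrow R_0$, I would argue by contradiction: if $\lim_{R\downarrow R_0}\infspec(\HGraph \cap B_R(x)) = M < \infty$, then for $R$ near $R_0$ each normalized ground state $u_R$ satisfies $\|u_R'\|_2^2 \leq a(u_R) \leq M$. Since the total volume $\mathrm{vol}(\HGraph \cap B_R(x)) \to 0$ as $R \downarrow R_0$, no component can be a fixed boundaryless component of $\HGraph$ lying inside the ball, so every component meets the Dirichlet sphere $\{\dist(\cdot,x)=R\}$ or $\partial\HGraph$ and thus carries a boundary point where $u_R$ vanishes; the Agmon-type estimate $\|u_R\|_\infty^2 \leq 2\|u_R\|_2\|u_R'\|_2 \leq 2\sqrt{M}$ then holds on each component. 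Hence $1 = \|u_R\|_2^2 \leq \|u_R\|_\infty^2\,\mathrm{vol}(\HGraph \cap B_R(x)) \to 0$, a contradiction, so the limit must be $+\infty$.

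Finally, for $R \to \infty$, monotonicity already gives $\lim_{R\to\infty}\infspec(\HGraph\cap B_R(x)) \geq \infspec(\HGraph)$; for the reverse I would take a near-minimizer $u$ of $\RQ_\HGraph$ and cut it off by $\chi_R := \max\{0,\min\{1,R-\dist(\cdot,x)\}\}$, noting that $\chi_R u$ vanishes on $\partial(\HGraph\cap B_R(x))$ and hence lies in $H^1_0(\HGraph\cap B_R(x))$. Since $(\chi_R u)' = \chi_R' u + \chi_R u' \to u'$ in $L^2$ (the tail term obeys $\|\chi_R' u\|_2 \leq \|u\|_{L^2(A_{R-1,R}(x))}\to 0$), while $\int_\HGraph V|\chi_R u|^2 \to \int_\HGraph V|u|^2$ by dominated convergence and $\|\chi_R u\|_2 \to \|u\|_2$, we get $\RQ_{\HGraph\cap B_R(x)}[\chi_R u] \to \RQ_\HGraph[u]$, yielding $\lim_{R\to\infty}\infspec(\HGraph\cap B_R(x)) \leq \infspec(\HGraph)$ and thus equality.
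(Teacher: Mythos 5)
Your proposal is correct, but in the two places where the real work happens it takes a genuinely different and more elementary route than the paper. For the lower semicontinuity the paper does not argue directly: it defers to the appendix, where a notion of topological convergence of subgraphs (Definition~\ref{def:graph-convergence}) is shown to imply Mosco convergence of the form domains (Lemma~\ref{lem:hgraph-mosco-convergence}), which is then converted into spectral semicontinuity by an IMS-formula argument (Lemma~\ref{lem:hgraph-ev-convergence}, culminating in Lemma~\ref{lem:domain-continuity}). That machinery is deliberately built to also cover the non-compact exteriors $\HGraph \setminus B_R(x)$, where ground states need not exist, as required for Lemma~\ref{lem:contracting-balls-at-infinity}. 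You instead exploit the compactness of balls guaranteed by the finite ball condition: ground states exist, they all live in the fixed compact graph $\HGraph \cap B_{R_1}(x)$, and weak-$H^1$/strong-$L^2$ compactness together with the support-based definition \eqref{eq:h10} of $H^1_0$ gives right-continuity, which for a monotone decreasing function is equivalent to lower semicontinuity. This is shorter and self-contained for the lemma at hand, though it would not extend to the exterior sets. Likewise, for the blow-up as $R \downarrow R_0$ the paper quotes Nicaise's inequality, whereas you prove the needed bound from scratch via the estimate $\|u_R\|_\infty^2 \leq 2\|u_R\|_2\|u_R'\|_2$ on components containing a zero of $u_R$; both yield the same volume-collapse contradiction. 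Your final limit (cutting off a near-minimizer) is essentially the paper's argument, with a slightly different cutoff.

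One small point to tighten in your $R \downarrow R_0$ step: ``meets the sphere $\{\dist_\Graph(\cdot,x)=R\}$'' does not by itself produce a point of $\partial(\HGraph \cap B_R(x))$ --- for instance, a degree-one vertex of $\HGraph$ at distance exactly $R$ from $x$ lies on the sphere but is an interior point of $\HGraph \cap B_R(x)$, so $u_R$ need not vanish there. What your volume argument actually delivers is the correct dichotomy: a component containing \emph{no} point of $\partial(\HGraph \cap B_R(x))$ at all is open and closed in the connected graph $\Graph$, hence equals $\Graph$, and so has fixed positive volume --- impossible once $|\HGraph \cap B_R(x)|$ is small. With that rephrasing, every component carries a zero of $u_R$ for $R$ near $R_0$, and your Agmon-type estimate applies as written.
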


\begin{proof}
First note that $\HGraph \cap B_R(x)$ is always compact, for all $R>0$ and all $x \in \Graph$, as the intersection of a closed and a compact set; in particular, since $V|_{\HGraph \cap B_R (x)} \in L^1 (\HGraph \cap B_R (x)$, $A_{\HGraph \cap B_{R(x)}}$ has discrete spectrum (see \cite[Section~2]{HKMP21pleijel}).

Monotonicity follows directly from Lemma~\ref{lem:monotonicity}(1); semicontinuity will be proved in Appendix~\ref{sec:appendix-continuity} (see Lemma~\ref{lem:domain-continuity}). For the first limit, we note that, since $R \mapsto |\HGraph \cap B_R(x)|$ is clearly a continuous function, in particular
\begin{displaymath}
    |\HGraph \cap B_R (x)| \to 0
\end{displaymath}
as $R \to R_0$. Since $V \geq 0$, a direct comparison of Rayleigh quotients together with Nicaise' inequality \cite[Th\'eor\`eme~3.1]{Ni87} applied to the compact graph $\HGraph \cap B_R(x)$ implies that
\begin{displaymath}
    \infspec (\HGraph \cap B_R(x)) \geq \frac{\pi^2}{4|\HGraph \cap B_R(x)|^2} \to \infty.
\end{displaymath}
For the second limit, there is nothing to prove if $\HGraph$ is bounded, so we assume it is unbounded. It clearly suffices to prove that $\infspec (\HGraph) \geq \limsup_{R\to \infty} \infspec (\HGraph \cap B_R(x))$; to this end, by the variational characterization \eqref{eq:infspec} and monotonicity, we only have to prove that for any $u \in H^1_0 (\HGraph)$ there exist $u_n \in H^1_0 (\HGraph \cap B_n(x))$ such that $u_n \to u$ in $H^1$ and hence $\RQ(u_n) \to \RQ(u)$.

But this, in turn, follows immediately from the density of $H^1_c (\HGraph) := \{f \in H^1_0 (\HGraph): \supp f \text{ is bounded}\}$ in $H^1_0 (\HGraph)$. This was previously shown in \cite[Lemma~3.9]{Ho19}, but for the sake of completeness we reproduce the argument here: we let $u\in H^1_0(\HGraph)$ and define
\begin{equation}
\label{eq:psi-n-cutoff-function}
    \psi_n(y) = \frac{1}{n} (n-\min\{n, \dist_\Graph(x, y)\}),
\end{equation}    
then $\psi_n$ is continuous and piecewise smooth, with $\supp \psi_n \subset B_n(x) $. Hence, 
\begin{equation}
    (\psi_n u)' = \psi_n' u + \psi_n u'
\end{equation}
and $\psi_n u\in H_c^1(\HGraph)$. We estimate
\begin{equation}
\begin{aligned}
    \| u - \psi_n u \|_{H^1}&\le\|u'- (\psi_n u)' \|_{L^2} + \|(1+V)^{1/2}(u-\psi_n u)\|_{L^2} \\
    &\le \|\psi_n' u\|_{L^2} + \|(1- \psi_n) u'\|_{L^2} +  \|(1+V)^{1/2}(u-\psi_n u)\|_{L^2} \\
    &\le \|\psi_n'\|_\infty \|u\|_{L^2} + \left ( \int_{\HGraph \setminus B_n(x)} |u'|^2\, \mathrm dx \right )^{1/2} + \left (\int_{\HGraph \setminus B_n(x)} (1+V)|u|^2\, \mathrm dx\right )^{1/2} \\
    &\le \frac{1}{n} \| u\|_{L^2} + \left (\int_{\HGraph \setminus B_n(x)} |u'|^2\, \mathrm dx\right )^{1/2} + \left ( \int_{\HGraph\setminus B_n(x)}  (1+V) |u|^2 \, \mathrm dx\right )^{1/2},
\end{aligned}
\end{equation}
which converges to $0$ as $n \to \infty$ since $u \in H^1_0(\HGraph)$.
\end{proof}

\begin{example}
\label{ex:domain-non-continuity}
Note that the function in \eqref{eq:expanding-ball} may not be continuous if $\HGraph$ has cycles or degree one vertices which are not boundary vertices. In the simplest case where $\Graph = \HGraph$ is a cycle, say of length one, then, fixing any $x \in \Graph$, since $B_R(x)$ is a Dirichlet interval of length $2R$ for all $R < 1/2$, and $B_R(x) = \Graph$ for $R\geq 1/2$, we have $\infspec (B_R(x)) = \frac{\pi^2}{4R^2}$ for $R< 1/2$ and $\infspec (B_R(x)) = 0$ for $R\geq 1/2$.
\end{example}

\begin{lemma}
\label{lem:contracting-balls-at-infinity}
Under our standing assumptions, the mapping
\begin{displaymath}
	R \mapsto \infspec (\HGraph \setminus B_R(x))
\end{displaymath}
is a continuous and monotonically increasing function of $R \in (0,\infty)$, which satisfies
\begin{equation}
\label{eq:threshold-condition}
	\lim_{R \to \infty} \infspec (\HGraph \setminus B_R(x)) = \infess (\HGraph).
\end{equation}
In particular, $\infess (\HGraph \setminus B_R (x)) = \infess (\HGraph)$ for all $R > 0$.
\end{lemma}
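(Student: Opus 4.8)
The plan is to handle the three assertions separately: monotonicity and the limit identity are essentially immediate consequences of results already available, while the continuity claim is the only genuine work. Monotonicity is nothing more than Lemma~\ref{lem:monotonicity}(1): if $R' > R$ then $\HGraph \setminus B_{R'}(x) \subseteq \HGraph \setminus B_R(x)$ as nested subgraphs, and extending by zero shows $\infspec(\HGraph\setminus B_{R'}(x)) \geq \infspec(\HGraph \setminus B_R(x))$; in particular the (possibly infinite) limit as $R \to \infty$ exists. For the identity \eqref{eq:threshold-condition} I would read off both inequalities directly from Persson's theorem (Theorem~\ref{thm:persson}), using the elementary dictionary that a function $f \in H^1_0(\HGraph)$, extended by zero, lies in $H^1_0(\HGraph\setminus B_R(x))$ precisely when $\support f \subset \HGraph \setminus B_R(x)$, together with the fact (from the proof of Lemma~\ref{lem:expanding-balls-at-0}) that $\HGraph\cap B_R(x)$ is a compact subset of $\HGraph$. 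For ``$\geq$'', fix any compact $\mathcal K \subsubset \HGraph$; since $\mathcal K$ is bounded it lies in the open ball of some radius $\rho$, so for every $R > \rho$ each admissible $f$ for $\HGraph\setminus B_R(x)$ has $\support f \cap \mathcal K = \emptyset$, whence $\infspec(\HGraph\setminus B_R(x)) \geq \inf_{\support f \cap \mathcal K = \emptyset}\RQ[f]$; letting $R\to\infty$ and taking the supremum over $\mathcal K$ yields $\lim_{R\to\infty}\infspec(\HGraph\setminus B_R(x)) \geq \infess(\HGraph)$ via \eqref{eq:ims-characterization}. For ``$\leq$'', take $\mathcal K = \HGraph \cap B_R(x)$: any nonzero $f$ with $\support f \cap \mathcal K = \emptyset$ is supported in $\HGraph \setminus B_R(x)$ and hence admissible for $\infspec(\HGraph\setminus B_R(x))$, so $\infspec(\HGraph\setminus B_R(x)) \leq \inf_{\support f \cap \mathcal K=\emptyset}\RQ[f] \leq \infess(\HGraph)$, and the limit inherits ``$\leq \infess(\HGraph)$''. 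Note this argument never uses $x \in \HGraph$, only that the balls $B_R(x)$ provide a compact exhaustion.

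The ``in particular'' statement then follows by applying the identity just proved to the subgraph $\HGraph' := \HGraph \setminus B_R(x)$ and exploiting the cancellation $\HGraph' \setminus B_\rho(x) = \HGraph \setminus B_\rho(x)$ valid for every $\rho > R$ (since $B_R(x) \subset B_\rho(x)$), which gives $\infess(\HGraph\setminus B_R(x)) = \lim_{\rho\to\infty}\infspec(\HGraph\setminus B_\rho(x)) = \infess(\HGraph)$. Because $\HGraph'$ may be disconnected, this step requires the mild extension of Persson's theorem, and hence of the limit identity, to subgraphs with finitely many components, where $\infess$ and $\infspec$ are read off as the minima of the corresponding quantities over the components; under the finite ball condition $\HGraph\setminus B_R(x)$ has only finitely many components, each meeting the finite sphere $\{\dist_\Graph(\cdot,x)=R\}$, so this causes no real difficulty.

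Continuity is the main obstacle, and I would split it by direction. Right-continuity, in which $R_n \downarrow R_0$ and the exteriors increase to $\HGraph\setminus B_{R_0}(x)$, is the ``continuity from inside'' direction: it reduces to showing that $\bigcup_n H^1_0(\HGraph\setminus B_{R_n}(x))$ is dense in $H^1_0(\HGraph\setminus B_{R_0}(x))$, i.e.\ that functions vanishing on a neighbourhood of $\{\dist_\Graph(\cdot,x)\le R_0\}$ approximate general elements, which follows from a cut-off argument of exactly the type used in Lemma~\ref{lem:form}. Left-continuity, in which $R_n \uparrow R_0$ and the exteriors decrease to $\HGraph\setminus B_{R_0}(x)$, is the genuinely delicate ``continuity from outside'' direction: given near-minimizers $u_n \in H^1_0(\HGraph\setminus B_{R_n}(x))$, one must control their mass on the shrinking annulus $A_{R_n,R_0}(x)$, whose volume tends to zero, and correct them into admissible functions for $\HGraph\setminus B_{R_0}(x)$ without inflating the Rayleigh quotient. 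This is precisely the spectral-convergence phenomenon for contracting subgraphs established through the IMS framework in the appendix, so I would prove monotonicity and the limit identity here and defer the continuity assertion to Lemma~\ref{lem:domain-continuity}, exactly as the semicontinuity claim was deferred in Lemma~\ref{lem:expanding-balls-at-0}.
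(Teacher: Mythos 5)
Your proposal is correct and takes essentially the same route as the paper: monotonicity from Lemma~\ref{lem:monotonicity}(1), the limit \eqref{eq:threshold-condition} read off from Persson's theorem (Theorem~\ref{thm:persson} together with Lemma~\ref{lem:ims}) via exactly the compact-exhaustion dictionary you describe, and the continuity claim deferred to the Mosco-convergence machinery of the appendix (Lemma~\ref{lem:domain-continuity}). Your additional care with the ``in particular'' statement (the cancellation $\HGraph' \setminus B_\rho(x) = \HGraph \setminus B_\rho(x)$ and the finitely-many-components extension of Persson's theorem) spells out a step the paper leaves implicit, but does not change the argument.
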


\begin{proof}
Monotonicity again follows from Lemma~\ref{lem:monotonicity}(1), while continuity will be proved in Lemma~\ref{lem:domain-continuity}. Since the monotonicity of Lemma~\ref{lem:monotonicity}(1) in particular implies that for any compact $\mathcal K \subsubset \HGraph$ and any $x \in \HGraph$,
\begin{displaymath}
	\infspec (\HGraph \setminus B_R(x)) \geq \inf_{\substack{0 \neq f \in H^1_0 (\HGraph)\\ \support f \cap \mathcal K = \emptyset}} \RQ[f]
\end{displaymath}
for all sufficiently large $R>0$, the limit \eqref{eq:threshold-condition} now follows from Theorem~\ref{thm:persson} and Lemma~\ref{lem:ims}.
\end{proof}

\begin{corollary}
\label{cor:ticket-zones}
Suppose in addition to Assumption~\ref{ass:main-assumption} that $\infess (\Graph) < \infty$, let $x \in \Graph$ be fixed but arbitrary, and let $\lambda > \infess (\Graph)$. Then for any $R_1>0$ there exist $R_3>R_2 \geq R_1$ such that
\begin{displaymath}
    \infspec (A_{R_1,R_3}) \leq \lambda \qquad \text{and}
    \qquad \infspec (A_{R_2,R_3}) = \lambda.
\end{displaymath}    
\end{corollary}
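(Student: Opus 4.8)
The plan is to exploit an asymmetry between the two radii of the annulus: by Lemma~\ref{lem:expanding-balls-at-0} the outer-radius map $R\mapsto\infspec(A_{R_1,R})$ is only lower semicontinuous, whereas by Lemma~\ref{lem:contracting-balls-at-infinity} the inner-radius map $R\mapsto\infspec(A_{R,R_3})$ is genuinely continuous. The first map will be used to push the energy strictly below $\lambda$, and the second, continuous one, to hit $\lambda$ exactly.

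First I would fix $R_1$ and work with the subgraph $\HGraph:=\Graph\setminus B_{R_1}(x)$, which is unbounded since $\infess(\Graph)<+\infty$ forces $\Graph$ to be unbounded (a compact graph has purely discrete spectrum). Since $R\mapsto\infspec(\Graph\setminus B_R(x))$ is increasing with limit $\infess(\Graph)$ by Lemma~\ref{lem:contracting-balls-at-infinity}, we have $\infspec(\HGraph)=\infspec(\Graph\setminus B_{R_1}(x))\le\infess(\Graph)<\lambda$. Applying Lemma~\ref{lem:expanding-balls-at-0} to $\HGraph$ with centre $x$, and observing that $\HGraph\cap B_R(x)=A_{R_1,R}$, the map $R\mapsto\infspec(A_{R_1,R})$ decreases to $\infspec(\HGraph)<\lambda$; hence I may fix some $R_3>R_1$ with $\infspec(A_{R_1,R_3})\le\lambda$. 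This already gives the first of the two required relations.

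Next, with $R_3$ frozen, I would vary the inner radius. Set $h(R_2):=\infspec(A_{R_2,R_3})=\infspec(B_{R_3}(x)\setminus B_{R_2}(x))$ for $R_2\in[R_1,R_3)$. By Lemma~\ref{lem:contracting-balls-at-infinity} applied with $\HGraph=B_{R_3}(x)$, the function $h$ is continuous and nondecreasing. At the left endpoint $h(R_1)=\infspec(A_{R_1,R_3})\le\lambda$, while as $R_2\uparrow R_3$ the volume $|A_{R_2,R_3}|\to0$ and the Nicaise-type bound from the proof of Lemma~\ref{lem:expanding-balls-at-0} forces $h(R_2)\to+\infty$; in particular $h(R_2)>\lambda$ for $R_2$ close to $R_3$. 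The intermediate value theorem then yields $R_2\in[R_1,R_3)$ with $\infspec(A_{R_2,R_3})=\lambda$, and together with $R_3>R_2\ge R_1$ and $\infspec(A_{R_1,R_3})\le\lambda$ this completes the argument.

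I expect the only real obstacle to be the blow-up $h(R_2)\to+\infty$ as $R_2\uparrow R_3$, since the Nicaise inequality only controls components that carry a Dirichlet vertex, and a priori a thin annulus could contain a whole cycle of $\Graph$ sitting strictly between the two spheres. Here connectedness of $\Graph$ saves the day: because $A_{R_2,R_3}$ is a proper subset of the connected, unbounded graph $\Graph$, every component of the annulus must be joined to the rest of $\Graph$ by an edge crossing $\{\dist(x,\cdot)=R_2\}$ or $\{\dist(x,\cdot)=R_3\}$, and the corresponding crossing point is a Dirichlet boundary vertex lying in that component. Thus each component does carry a Dirichlet vertex, so $\infspec(A_{R_2,R_3})\ge \pi^2 c^2/(4|A_{R_2,R_3}|^2)\to+\infty$. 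The continuity of $h$ I would simply cite from Lemma~\ref{lem:contracting-balls-at-infinity} (proved in the appendix); it is precisely this continuity, unavailable for the outer radius by Example~\ref{ex:domain-non-continuity}, that allows the energy to attain the value $\lambda$ exactly rather than merely to cross it.
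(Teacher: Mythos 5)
Your proof is correct and follows essentially the same route as the paper's: first use Lemma~\ref{lem:contracting-balls-at-infinity} to get $\infspec(\Graph\setminus B_{R_1}(x))<\lambda$, then Lemma~\ref{lem:expanding-balls-at-0} to fix $R_3$ with $\infspec(A_{R_1,R_3})\leq\lambda$, and finally continuity in the inner radius together with the Nicaise blow-up and the intermediate value theorem to hit $\lambda$ exactly. The only (harmless) differences are cosmetic: you apply the inner-radius lemma with $\HGraph=B_{R_3}(x)$ rather than $\HGraph=A_{R_1,R_3}$, and you make explicit the observation--which the paper merely asserts--that connectedness of $\Graph$ forces every component of the annulus to contain a Dirichlet vertex.
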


\begin{proof}
We first consider $\HGraph:= \Graph \setminus B_{R_1} (x)$. By Lemma~\ref{lem:contracting-balls-at-infinity} and assumption,
\begin{displaymath}
	\infspec (\HGraph) = \infspec (\Graph \setminus B_{R_1}(x)) \leq \lim_{R \to \infty} \infspec (\Graph \setminus B_R(x)) = \infess (\Graph) < \lambda.
\end{displaymath}
We now apply Lemma~\ref{lem:expanding-balls-at-0} to $\HGraph$: since $A_{R_1,R_3} (x) = \HGraph \cap B_{R_3}(x)$, we have that $\infspec (A_{R_1,R_3}(x))$ is a monotonically decreasing function of $R_3 \in (R_1,\infty)$, which satisfies $\infspec (A_{R_1,R_3}(x)) \to \infty$ as $R_3 \to R_1$ and $\infspec (A_{R_1,R_2}(x)) \to \infspec (\HGraph) < \lambda$ as $R_3 \to \infty$. We can thus find some $R_3 > R_1$ such that $\infspec (A_{R_1,R_3} (x)) \leq \lambda$.

If there is no $R_3$ which guarantees equality, then, having fixed $R_3$ for which the inequality is strict, we finally apply Lemma~\ref{lem:contracting-balls-at-infinity} to $\HGraph = A_{R_1,R_3}$ and $B_{R_2} (0)$ for $R_2 \in [R_1,R_3)$. Now $\infspec (A_{R_2,R_3})$ is a continuous function of $R_2$, which converges to $\infspec (A_{R_1,R_3} (x)) < \lambda$ as $R_2 \to R_1$. We also claim it diverges to $\infty$ as $R_2 \to R_3$; indeed, in this case $|A_{R_2,R_3}| \to 0$, whence by Nicaise' inequality \cite[Th\'eor\`eme~3.1]{Ni87} applied to the compact graph $A_{R_2,R_3}$
\begin{displaymath}
    \infspec (A_{R_2,R_3}) \geq \frac{\pi^2 c^2}{4|A_{R_2,R_3}|} \to \infty,
\end{displaymath}
where we have estimated $V$ from below by $0$ and used that $A_{R_2,R_3}$ always has at least one boundary point and thus Dirichlet vertex. The existence of some $R_2 \in (R_1,R_3)$ such that $\infspec (A_{R_2,R_3}) = \lambda$ follows.
\end{proof}

\section{Proof of the main theorems}
\label{sec:proofs}

\begin{proof}[Proof of Theorem~\ref{thm:less_than_ess}]
We may obviously suppose that $\infess (\Graph) < \infty$. Fix $k \in \N$ and $\lambda > \infess (\Graph)$; we will find a $k$-partition $\Partition = (\Graph_1,\ldots,\Graph_k)$ such that
\begin{displaymath}
	\denergy{k} (\Partition) \leq \lambda;
\end{displaymath}
the conclusion of the theorem then follows immediately.

Fix any point $0 \in \Graph$, without loss of generality a vertex. Now since $\lambda> \infess (\Graph) \geq \infspec (\Graph)$, by the monotonicity and limit properties from Lemma~\ref{lem:expanding-balls-at-0} applied to $\HGraph = \Graph$ there exists $R_1 > 0$ such that
\begin{displaymath}
	\infspec (B_{R_1} (0)) \leq \lambda;
\end{displaymath}
we then set $\Graph_1 := B_{R_1} (0)$ and observe that $\Graph_1$ is necessarily connected. We next apply Corollary~\ref{cor:ticket-zones} successively to find radii $R_1 < R_2 < \ldots < R_k$ such that
\begin{displaymath}
	\infspec (A_{R_{i-1},R_i} (0)) \leq \lambda
\end{displaymath}
for all $i=2,\ldots,k$. Noting that $A_{R_{i-1},R_i} (0)$ is compact but may not be connected, there exists a connected component, call it $\Graph_i$, on which the ground state of $A_{R_{i-1},R_i} (0)$ is supported, that is, such that $\infspec (\Graph_i) = \infspec (A_{R_{i-1},R_i} (0)) \leq \lambda$. We choose $(\Graph_1,\ldots,\Graph_k)$, thus defined, to be our partition.
\end{proof}

\begin{proof}[Proof of Theorem~\ref{thm:existenceprinciple}]
Let $\Partition_n = (\Graph_{1,n}, \ldots, \Graph_{k,n})$ be a sequence of $k$-partitions whose energy approaches the infimum $\doptenergy{k}$; we may assume the existence of some $\lambda < \infess (\Graph)$ such that $\denergy{k} (\Partition_n) \leq \lambda$ for all $n \geq 1$. We fix an arbitrary point $0 \in \Graph$. The proof of existence is divided into three steps; the finite ball property contained in Assumption~\ref{ass:main-assumption} will enter explicitly in Steps 2 and 3.

\textit{Step 1: for any sufficiently large $r>0$, $\Graph_{i,n}$ has nontrivial intersection with $B_r(0)$ for all $i$ and $n$.} Indeed, by Lemma~\ref{lem:contracting-balls-at-infinity}, for any $r>0$ sufficiently large, $\infspec (\Graph \setminus B_r(0)) > \lambda$. Fix any such $r>0$. If, for some $i = 1,\ldots, k$ and $n \geq 1$, we should have $\Graph_{i,n} \subset \Graph \setminus B_r(0)$, then by domain monotonicity, Lemma~\ref{lem:monotonicity}(1),
\begin{displaymath}
    \lambda < \infspec (\Graph \setminus B_r(0)) \leq \infspec (\Graph_{i,n}) \leq \denergy{k} (\Partition_n) \leq \lambda,
\end{displaymath}
a contradiction.

\textit{Step 2: up to a subsequence there exists a limit partition $\Partition$.} More precisely, we will show that there exists a $k$-partition $\Partition = (\Graph_1, \ldots, \Graph_k)$ such that, up to a subsequence, on every precompact set $\mathcal{K}$ of $\Graph$, $\Graph_{i,k} \cap \mathcal{K} \to \Graph_i \cap \mathcal{K}$ in the sense of \cite[Section~3]{KKLM21}.

For each $n \geq 1$, $B_n(0)$ is a compact, connected subset of $\Graph$, which as a subgraph contains a finite number of edges by the finite ball property; hence there are only finitely many \emph{configuration classes} inside $B_n(0)$ in the sense of \cite[Definition~3.3]{KKLM21}. Thus, as in the proof of \cite[Theorem~3.13]{KKLM21}, up to a subsequence the restricted partitions $\Partition_{n,n} := (\Graph_{1,n} \cap B_n(0), \ldots, \Graph_{k,n} \cap B_n(0))$ admit a limit $\Partition_n^\ast = (\Graph_{1,n}^\ast,\ldots,\Graph_{k,n}^\ast)$; by Step 1, for $n \geq 1$ sufficiently large $\Partition_n^\ast$ is a $k$-partition. (Note that this argument is purely topological and does \emph{not} involve the partition energies or the underlying operators; thus the proof of  \cite[Theorem~3.13]{KKLM21} may be repeated verbatim in our case.)

By a diagonal argument, we can ensure that whenever $m>n$, for all $i=1,\ldots,k$, $\Graph_{i,m}^\ast \cap B_n(0)$ and $\Graph_{i,n}^\ast$ coincide. In particular, the limit $\Graph_i$ is well-defined as
\begin{displaymath}
    \Graph_i = \bigcup_{m\geq 1} \lim_{n \to \infty} \Graph_{i,n} \cap B_m (0)
\end{displaymath}
(where for each $m \in \N$ the limit is again understood in the sense of \cite[Section~3]{KKLM21}, inside the fixed compact graph $B_m(0)$). We then set $\Partition := (\Graph_1, \ldots, \Graph_k)$; by construction, this partition is the limit in the sense claimed above.

\textit{Step 3: lower semicontinuity with respect to partition convergence.} We show that up to a further subsequence 
\begin{equation}
\label{eq:step-3}
\infspec (\Graph_i) \le \limsup_{n\to \infty} \infspec (\Graph_{i,n})  
\end{equation}
for all $i=1,\ldots,k$, from which the conclusion of the theorem will follow immediately since 
\begin{equation}
    \doptenergy{k}(\Graph)\le \denergy{k}(\Partition)= \max_{i=1,\ldots, k} \infspec (\Graph_i) \le \lim_{n\to \infty} \max_{i=1,\ldots, k} \infspec(\Graph_i) = \doptenergy{k}(\Graph).  
\end{equation}

Fix $i=1,\ldots,k$. We first observe that, for each fixed $n \geq 1$, by Lemma~\ref{lem:expanding-balls-at-0} applied to $\HGraph = \Graph_{i,n}$, we have $\infspec (\Graph_{i,n} \cap B_r(0)) \to \infspec (\Graph_{i,n})$ as $r \to \infty$; a corresponding statement is true for $\Graph_i$.

At the same time, for each fixed $r>0$ large enough the nature of the convergence $\Graph_{i,n} \cap B_r(0) \to \Graph_i \cap B_r(0)$ implies that each boundary vertex in $\Graph_{i}\cap B_r(0)$ admits a sequence of boundary vertices of $\Graph_{i,n} \cap B_r(0)$ that converge towards it in the metric space $\Graph$. Then by Corollary~\ref{cor:hgraph-mosco-convergence} we infer 
\begin{displaymath}
\infspec (\Graph_i \cap B_r(0)) \le \limsup_{n\to \infty} \infspec (\Graph_{i,n} \cap B_r(0)) 
\end{displaymath}
as $n \to \infty$ for each $r>0$ large enough.

Finally, we may assume without loss of generality (after passing to a further subsequence if necessary) that $\infspec (\Graph_{i,n})$ forms a Cauchy sequence in $n\geq 1$, since its values are bounded in the interval $[\doptenergy{k}(\Graph), \lambda]$. In particular, up to a further subsequence, $\infspec (\Graph_{i,n} \cap B_n (0))$ is also Cauchy and has the same limit. Using monotonicity in $r$, which implies (possibly up to a further subsequence) that
\begin{displaymath}
    \lim_{r\to\infty} \limsup_{n\to\infty} \infspec (\Graph_{i,n} \cap B_r(0))
    = \lim_{n\to\infty} \infspec (\Graph_{i,n} \cap B_n(0))
    = \lim_{n\to\infty} \infspec (\Graph_{i,n}),
\end{displaymath}
we thus finally obtain, for this subsequence,
\begin{displaymath}
    \infspec (\Graph_i) = \lim_{n\to\infty} \infspec (\Graph_i \cap B_n (0))
    \leq \lim_{n\to\infty} \infspec (\Graph_{i,n} \cap B_n(0))
    = \lim_{n\to\infty} \infspec (\Graph_{i,n})
\end{displaymath}
for all $i=1,\ldots,k$, as claimed, from which \eqref{eq:step-3} follows. This completes the proof that $\doptenergy{k}(\Graph) = \denergy{k} (\Partition)$, and hence that $\Partition$ is a $k$-partition such that $\denergy{k} (\Partition) = \lim_{n \to \infty} \denergy{k} (\Partition_n) = \doptenergy{k} (\Graph)$.

Now, by Lemma~\ref{lem:monotonicity}(3) and by assumption, for each $i$ we have $\infess (\Graph_i) \geq \infess (\Graph) > \infspec (\Graph_i)$. It now follows immediately from standard theory for Schrödinger operators that $\infspec (\Graph_i)$ is an isolated eigenvalue with a positive eigenfunction (see, e.g., \cite{Ku19}).
\end{proof}

\section{Examples}
\label{sec:examples}

From our main results we know that only two scenarios are possible. Either $\doptenergy{k}(\Graph)<\Sigma(\Graph)$ and spectral minimal $k$-partitions exist, or else $\doptenergy{k}(\Graph)=\Sigma(\Graph)$. Here we will discuss three examples which showcase what can happen when there is equality.

The first is a simple one which shows that spectral minimal $k$-partitions may exist for some $k$, but not for others. The second and third are based on so-called \emph{rooted equilateral tree graphs}, which have been considered in many works, including \cite{DST20, Ho21, KoMuNi19,KoNi19,SoSo02,So04}, among others. The second, which admits a minimal $k$-partition for any $k \geq 1$, also illustrates that on graphs $\doptenergy{k} (\Graph) = \infess (\Graph) > 0$ is possible for all $k \geq 1$ even if $V \equiv 0$. The third shows that if a minimal $k$-partition exists, then the clusters may or may not admit ground states.

\begin{example}
\label{ex:main}
Take $\Graph = \R$ and $V \equiv 0$; then for all $k \geq 1$ we clearly have
\begin{displaymath}
	\doptenergy{k}(\R) = \infess (\R) = 0.
\end{displaymath}
When $k=2$, for any $x \in \R$ the $2$-partition $\Partition_2^x := ( (-\infty,x], [x,\infty) )$ is minimal, although in this case the infimum of the spectrum is not an eigenvalue. However, for $k\geq 3$ there are no partitions whose spectral energy is $0$ since necessarily at least one partition element needs to be bounded and hence has a strictly positive associated ground state energy.

More generally, if $\mathcal{S}_m$ denotes the $m$-star graph consisting of $m$ semi-axes, or \emph{rays}, joined at a common vertex, then we still have
\begin{equation}
\label{eq:rays}
	\doptenergy{k}(\mathcal{S}_m) = \infess (\mathcal{S}_m) = 0
\end{equation}
for all $k \in \N$; there exists a minimizing $k$-partition if and only if $k \leq m$. Any graph satisfying Assumption~\ref{ass:main-assumption} which has at least $m \geq 1$ such rays satisfies \eqref{eq:rays} for all $k \in \N$, and admits a minimizing $k$-partition for all $k \leq m$ (this includes the graphs sometimes called \emph{starlike}, as in \cite{CaFiNo17}).
\end{example}

\begin{example}
\label{ex:v0ls}
Fix $k \geq 2$. We start by considering a homogeneous rooted tree $\Tree$, i.e. a regular tree with constant branching number $b$ and constant edge length $1$ (see Figure~\ref{fig:regular_tree_graph}), with Neumann condition at the root, and equipped with the Laplacian, i.e. $V \equiv 0$.
\begin{figure}[ht]
    \centering
    \begin{minipage}{.375\textwidth}\includegraphics[clip, trim=5cm 0cm 5cm 0cm, scale=0.8]{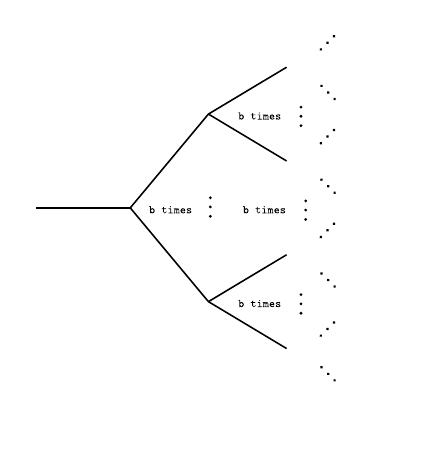}\end{minipage}
    \begin{minipage}{.375\textwidth}\includegraphics[clip, trim=5cm 0cm 5cm 0cm,scale=0.8]{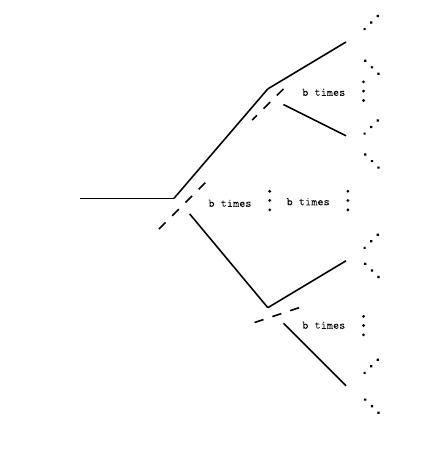}\end{minipage}\vspace{-12pt}
    \caption{Left: a rooted tree $\Tree$ with branching number $b$. Right: a sketch of how to find an infinite spectral minimal partition by ``cutting branches''. }
    \label{fig:regular_tree_graph}
\end{figure}

Then it is known (see \cite[Theorem~3.3]{SoSo02}) that the spectrum of $\Tree$ is purely essential with $\infspec(\Tree) = \infess (\Tree) = \theta^2 > 0$, where
\begin{displaymath}
    \theta = \arccos \left( \frac{2}{b^{1/2} + b^{-1/2}} \right),
\end{displaymath}
and there are no eigenfunctions corresponding to this value.

It follows from Lemma~\ref{lem:monotonicity} that $\infspec (\HGraph) \geq \infspec (\Tree) = \infess (\Tree) > 0$ for any subgraph $\HGraph \subset \Tree$, and thus $\doptenergy{k} (\Tree) = \infess (\Tree)$ for all $k \geq 1$. However, for any fixed $k$, we can find $k$ copies $\Tree_1, \ldots, \Tree_k$ of $\Tree$ as imbedded subgraphs of $\Tree$ (cf.\ Figure~\ref{fig:regular_tree_graph}) with a Dirichlet rather than a Neumann condition at their root vertex (which is their only boundary vertex as subgraphs). This does not affect the essential spectrum, and thus $\infess (\Tree) = \infess (\Tree_i) = \infspec (\Tree_i) \geq \infspec (\Tree_i) = \infess (\Tree)$ for all $i$. In particular, together they form a $k$-minimal partition.
\end{example}

\begin{example}
\label{ex:startree}
We take $\Tree$ as in the previous example and continue to assume $V \equiv 0$; however, we impose a Dirichlet condition at the root, which clearly does not affect either the value or the nature of $\theta$. Given $k \geq 2$, we will modify $\Tree$ to create a graph $\Graph_k$ by attaching $k$ intervals to $\Tree$ at the root (see Figure~\ref{fig:regular_tree_graph_intervals}).
\begin{figure}[ht]
    \centering
    \includegraphics[scale=0.8]{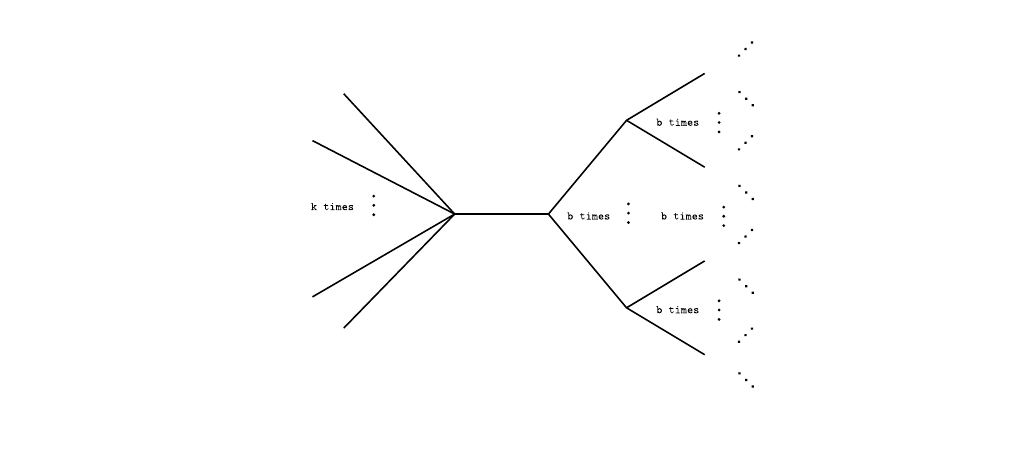}\vspace{-24pt}
    \caption{The graph $\Graph_k$ formed by gluing a rooted tree $\Tree$ with intervals with equal ground state energy at the root vertex.}
    \label{fig:regular_tree_graph_intervals}
\end{figure}

More precisely, we consider intervals $\mathcal I_j, j = 1, \dots k$, each of length $\ell = \pi / (2 \theta)$, and each equipped with the Laplacian with a Dirichlet at one endpoint and a Neumann condition at the other. We let $\widetilde\Graph_k := \Tree \cup \bigcup_j \mathcal I_j$ be the disjoint union of the tree and these intervals.

Now the spectrum of $\mathcal I_j$ is purely discrete; indeed, $\sigma(\mathcal I_j) = \{ n^2 \theta^2 : n \in \N \}$. So the Laplacian on $\widetilde\Graph_k$ has spectrum $\sigma(\Tree \cup \bigcup_j \mathcal I_j) = \ess(\Tree) \cup \bigcup_{j=1}^{k} \sigma(\mathcal I_j)$ where the eigenvalues are counted with multiplicities. We now glue together all $k+1$ Dirichlet points at the root of the tree, which we consider to be equipped with standard conditions in the resulting graph $\Graph_k$. The common gluing vertex will be called the central vertex. The operation of gluing is a rank one perturbation of the standard Laplacian which pushes down the eigenvalues according to a standard interlacing inequality (see \cite[Theorems~3.1.10 and~3.1.11]{BeKu13}), thus
\begin{equation}
    \lambda_1(\Graph_k) \leq \theta^2 = \lambda_2(\Graph_k) = \dots = \lambda_k(\Graph_k).
\end{equation}

We claim that any partition $\Partition_k$ of $\Graph_k$ whose clusters are any $k$ of the $k+1$ connected components of $\widetilde\Graph_k$ is a minimal $k$-partition of $\Graph_k$. Combining this with the cutting principle in Example~\ref{ex:v0ls}, for any $j=0,\ldots,k$ we can thus find a minimal $k$-partition $\Partition$ such that exactly $j$ clusters of $\Partition$ admit a ground state.

We now prove the claim. By construction, each interval graph $\mathcal I_j, j=1,\dots,k$ with Kirchhoff-Neumann conditions at the endpoints shares the same ground state energy $\theta^2$, which is also the infimum of the spectrum of $\Tree$. We need to show that $\Partition_k$ is minimal, i.e. $\denergy{k} (\Partition_k) = \doptenergy{k} (\Graph_k)$.
Suppose $\widetilde \Partition_k$ is a $k$-partition with a set of cut points different than the central vertex, then at least one of the cut points must belong to one of the connected components of $\widetilde\Graph_k$. Then either one of the partition elements would be an interval smaller than $\pi/(2\theta)$ or one partition element would be contained in $\Tree$. Hence, $\denergy{k}(\Partition) \ge \denergy{k}(\Partition_k)= \theta$ and $\Partition_k$ is a spectral minimal partition.
\end{example}

\appendix
\section{Continuity of the infimum of the spectrum with respect to graph perturbation}
\label{sec:appendix-continuity}

Here we will show how the infimum of the spectrum of a Schr\"odinger operator on a subgraph $\HGraph$ of a given graph $\Graph$, depends continuously, or semicontinuously, on $\HGraph$, in particular to complete the proofs of Lemma~\ref{lem:expanding-balls-at-0} and Lemma~\ref{lem:contracting-balls-at-infinity}: more precisely, for a given metric graph $\Graph$ satisfying the finite ball condition of Assumption~\ref{ass:main-assumption}, a fixed subgraph $\HGraph$ of $\Graph$, and any $x \in \Graph$, the map $R \mapsto \infspec (\HGraph \cap B_R(x))$ is a lower semicontinuous function of $R \in (0,\infty)$, while $R \mapsto \infspec (\HGraph \setminus B_R(x))$ is continuous. (We recall that $B_R (x)$ is the \emph{closed} ball of Definition~\ref{def:balls}, and that as subgraphs $\HGraph \cap B_R(x)$ and $\HGraph \setminus B_R(x)$ are assumed to contain no isolated points.) To this end we fix a graph $\Graph$ and a potential $V$ satisfying Assumption~\ref{ass:main-assumption}.

We start by formalizing a natural notion of convergence of subgraphs, in terms of the behavior of their boundaries. We first need to distinguish subsets of a sequence of non-connected subgraphs which might disappear in the limit (as happened to the complement of $B_R(x)$ in Example~\ref{ex:domain-non-continuity}), and which are thus irrelevant for spectral purposes as their ground state energy diverges. This leads to the following notion. We recall that the boundary of a subgraph is always its topological boundary as a subset of $\Graph$.

\begin{definition}
Let $\HGraph_n$ be any sequence of subgraphs of a fixed graph $\Graph$. We will say that a sequence of boundary points $v_n \in \partial\HGraph_n$ is \emph{vanishing} if, for any $r>0$,
\begin{displaymath}
    |\HGraph_n \cap B_r (v_n)| \to 0,
\end{displaymath}
and \emph{non-vanishing} otherwise.
\end{definition}

This will only be of interest when $v_n$ forms a Cauchy sequence in $\Graph$. 

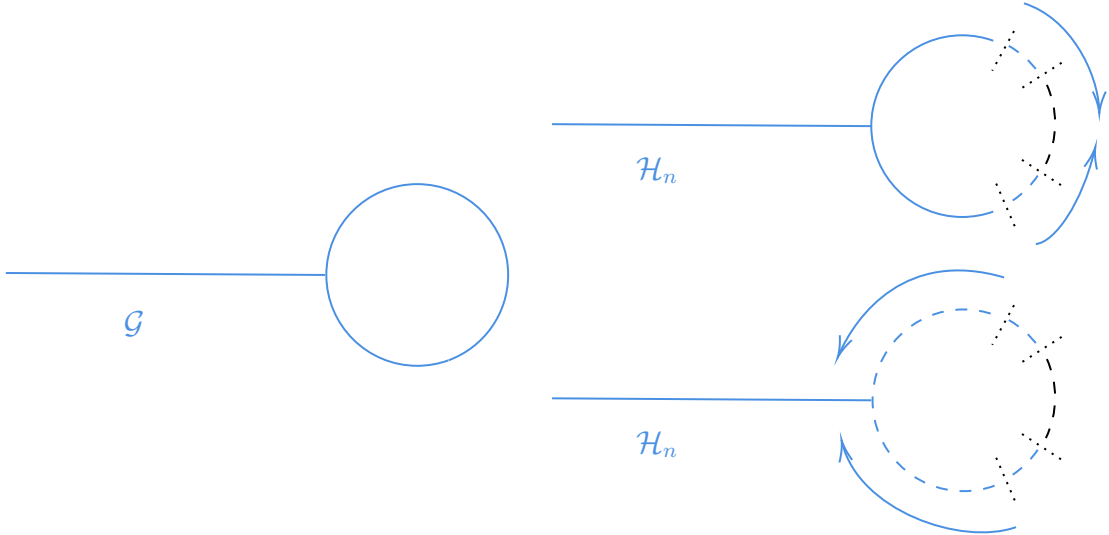
\begin{figure}[ht]
    \centering

\begin{minipage}{.4\textwidth}
\tikzset{every picture/.style={line width=0.75pt}} 

\begin{tikzpicture}[x=0.75pt,y=0.75pt,yscale=-1,xscale=1]

\draw [color={rgb, 255:red, 74; green, 144; blue, 226 }  ,draw opacity=1 ]   (100,120) -- (261,121) ;
\draw  [draw opacity=0] (323.19,164.1) .. controls (318.29,165.88) and (313.01,166.85) .. (307.5,166.85) .. controls (282.18,166.85) and (261.65,146.32) .. (261.65,121) .. controls (261.65,95.68) and (282.18,75.15) .. (307.5,75.15) .. controls (332.82,75.15) and (353.35,95.68) .. (353.35,121) .. controls (353.35,140.74) and (340.88,157.57) .. (323.38,164.03) -- (307.5,121) -- cycle ; \draw  [color={rgb, 255:red, 74; green, 144; blue, 226 }  ,draw opacity=1 ] (323.19,164.1) .. controls (318.29,165.88) and (313.01,166.85) .. (307.5,166.85) .. controls (282.18,166.85) and (261.65,146.32) .. (261.65,121) .. controls (261.65,95.68) and (282.18,75.15) .. (307.5,75.15) .. controls (332.82,75.15) and (353.35,95.68) .. (353.35,121) .. controls (353.35,140.74) and (340.88,157.57) .. (323.38,164.03) ;  

\draw (158,138) node [anchor=north west][inner sep=0.75pt]  [color={rgb, 255:red, 74; green, 144; blue, 226 }  ,opacity=1 ] [align=left] {$\displaystyle \mathcal{G}$};

\end{tikzpicture}    
\end{minipage}
\ \quad \ 
\begin{minipage}{.4\textwidth}
\tikzset{every picture/.style={line width=0.75pt}} 

\begin{tikzpicture}[x=0.75pt,y=0.75pt,yscale=-1,xscale=1]

\draw [color={rgb, 255:red, 74; green, 144; blue, 226 }  ,draw opacity=1 ]   (100,120) -- (261,121) ;
\draw  [draw opacity=0] (322.54,164.1) .. controls (317.65,165.88) and (312.36,166.85) .. (306.85,166.85) .. controls (281.53,166.85) and (261,146.32) .. (261,121) .. controls (261,95.68) and (281.53,75.15) .. (306.85,75.15) .. controls (312.36,75.15) and (317.65,76.12) .. (322.54,77.9) -- (306.85,121) -- cycle ; \draw  [color={rgb, 255:red, 74; green, 144; blue, 226 }  ,draw opacity=1 ] (322.54,164.1) .. controls (317.65,165.88) and (312.36,166.85) .. (306.85,166.85) .. controls (281.53,166.85) and (261,146.32) .. (261,121) .. controls (261,95.68) and (281.53,75.15) .. (306.85,75.15) .. controls (312.36,75.15) and (317.65,76.12) .. (322.54,77.9) ;  
\draw  [draw opacity=0][dash pattern={on 4.5pt off 4.5pt}] (349.56,100.19) .. controls (352.22,106.11) and (353.7,112.67) .. (353.7,119.58) .. controls (353.7,126.92) and (352.03,133.87) .. (349.05,140.07) -- (306.34,119.58) -- cycle ; \draw  [dash pattern={on 4.5pt off 4.5pt}] (349.56,100.19) .. controls (352.22,106.11) and (353.7,112.67) .. (353.7,119.58) .. controls (353.7,126.92) and (352.03,133.87) .. (349.05,140.07) ;  
\draw  [dash pattern={on 0.84pt off 2.51pt}]  (324,150) -- (334,173) ;
\draw  [dash pattern={on 0.84pt off 2.51pt}]  (333,73) -- (322,93) ;
\draw  [dash pattern={on 0.84pt off 2.51pt}]  (357,89) -- (335,103) ;
\draw  [dash pattern={on 0.84pt off 2.51pt}]  (337,138) -- (357,151) ;
\draw  [draw opacity=0][dash pattern={on 4.5pt off 4.5pt}] (344.95,145.96) .. controls (341.62,150.94) and (337.33,155.22) .. (332.34,158.53) -- (307.5,121) -- cycle ; \draw  [color={rgb, 255:red, 74; green, 144; blue, 226 }  ,draw opacity=1 ][dash pattern={on 4.5pt off 4.5pt}] (344.95,145.96) .. controls (341.62,150.94) and (337.33,155.22) .. (332.34,158.53) ;  
\draw  [draw opacity=0][dash pattern={on 4.5pt off 4.5pt}] (330.35,80.49) .. controls (336.36,83.89) and (341.55,88.59) .. (345.51,94.21) -- (307.5,121) -- cycle ; \draw  [color={rgb, 255:red, 74; green, 144; blue, 226 }  ,draw opacity=1 ][dash pattern={on 4.5pt off 4.5pt}] (330.35,80.49) .. controls (336.36,83.89) and (341.55,88.59) .. (345.51,94.21) ;  
\draw [color={rgb, 255:red, 74; green, 144; blue, 226 }  ,draw opacity=1 ]   (344,180.5) .. controls (353.8,179.52) and (367.44,159.33) .. (373.63,133.11) ;
\draw [shift={(374,131.5)}, rotate = 102.53] [color={rgb, 255:red, 74; green, 144; blue, 226 }  ,draw opacity=1 ][line width=0.75]    (10.93,-3.29) .. controls (6.95,-1.4) and (3.31,-0.3) .. (0,0) .. controls (3.31,0.3) and (6.95,1.4) .. (10.93,3.29)   ;
\draw [color={rgb, 255:red, 74; green, 144; blue, 226 }  ,draw opacity=1 ]   (338,59) .. controls (361.28,66.28) and (375.15,93.78) .. (375.96,112.76) ;
\draw [shift={(376,114.5)}, rotate = 270] [color={rgb, 255:red, 74; green, 144; blue, 226 }  ,draw opacity=1 ][line width=0.75]    (10.93,-3.29) .. controls (6.95,-1.4) and (3.31,-0.3) .. (0,0) .. controls (3.31,0.3) and (6.95,1.4) .. (10.93,3.29)   ;

\draw (141,135) node [anchor=north west][inner sep=0.75pt]  [color={rgb, 255:red, 74; green, 144; blue, 226 }  ,opacity=1 ] [align=left] {$\displaystyle \mathcal{H}_{n}$};

\end{tikzpicture}

\tikzset{every picture/.style={line width=0.75pt}} 

\begin{tikzpicture}[x=0.75pt,y=0.75pt,yscale=-1,xscale=1]

\draw [color={rgb, 255:red, 74; green, 144; blue, 226 }  ,draw opacity=1 ]   (100,120) -- (261,121) ;
\draw  [draw opacity=0][dash pattern={on 4.5pt off 4.5pt}] (323.19,164.1) .. controls (318.29,165.88) and (313.01,166.85) .. (307.5,166.85) .. controls (282.18,166.85) and (261.65,146.32) .. (261.65,121) .. controls (261.65,95.68) and (282.18,75.15) .. (307.5,75.15) .. controls (313.01,75.15) and (318.29,76.12) .. (323.19,77.9) -- (307.5,121) -- cycle ; \draw  [color={rgb, 255:red, 74; green, 144; blue, 226 }  ,draw opacity=1 ][dash pattern={on 4.5pt off 4.5pt}] (323.19,164.1) .. controls (318.29,165.88) and (313.01,166.85) .. (307.5,166.85) .. controls (282.18,166.85) and (261.65,146.32) .. (261.65,121) .. controls (261.65,95.68) and (282.18,75.15) .. (307.5,75.15) .. controls (313.01,75.15) and (318.29,76.12) .. (323.19,77.9) ;  
\draw  [draw opacity=0][dash pattern={on 4.5pt off 4.5pt}] (349.56,100.19) .. controls (352.22,106.11) and (353.7,112.67) .. (353.7,119.58) .. controls (353.7,126.92) and (352.03,133.87) .. (349.05,140.07) -- (306.34,119.58) -- cycle ; \draw  [dash pattern={on 4.5pt off 4.5pt}] (349.56,100.19) .. controls (352.22,106.11) and (353.7,112.67) .. (353.7,119.58) .. controls (353.7,126.92) and (352.03,133.87) .. (349.05,140.07) ;  
\draw  [dash pattern={on 0.84pt off 2.51pt}]  (324,150) -- (334,173) ;
\draw  [dash pattern={on 0.84pt off 2.51pt}]  (333,73) -- (322,93) ;
\draw  [dash pattern={on 0.84pt off 2.51pt}]  (357,89) -- (335,103) ;
\draw  [dash pattern={on 0.84pt off 2.51pt}]  (337,138) -- (357,151) ;
\draw  [draw opacity=0][dash pattern={on 4.5pt off 4.5pt}] (344.95,145.96) .. controls (341.62,150.94) and (337.33,155.22) .. (332.34,158.53) -- (307.5,121) -- cycle ; \draw  [color={rgb, 255:red, 74; green, 144; blue, 226 }  ,draw opacity=1 ][dash pattern={on 4.5pt off 4.5pt}] (344.95,145.96) .. controls (341.62,150.94) and (337.33,155.22) .. (332.34,158.53) ;  
\draw  [draw opacity=0][dash pattern={on 4.5pt off 4.5pt}] (330.35,80.49) .. controls (336.36,83.89) and (341.55,88.59) .. (345.51,94.21) -- (307.5,121) -- cycle ; \draw  [color={rgb, 255:red, 74; green, 144; blue, 226 }  ,draw opacity=1 ][dash pattern={on 4.5pt off 4.5pt}] (330.35,80.49) .. controls (336.36,83.89) and (341.55,88.59) .. (345.51,94.21) ;  
\draw [color={rgb, 255:red, 74; green, 144; blue, 226 }  ,draw opacity=1 ]   (334,185) .. controls (306.42,194.85) and (254.58,176.56) .. (246.35,142.56) ;
\draw [shift={(246,141)}, rotate = 78.69] [color={rgb, 255:red, 74; green, 144; blue, 226 }  ,draw opacity=1 ][line width=0.75]    (10.93,-3.29) .. controls (6.95,-1.4) and (3.31,-0.3) .. (0,0) .. controls (3.31,0.3) and (6.95,1.4) .. (10.93,3.29)   ;
\draw [color={rgb, 255:red, 74; green, 144; blue, 226 }  ,draw opacity=1 ]   (328,59) .. controls (278.53,44.45) and (253.52,77.89) .. (244.77,97.7) ;
\draw [shift={(244,99.5)}, rotate = 292.31] [color={rgb, 255:red, 74; green, 144; blue, 226 }  ,draw opacity=1 ][line width=0.75]    (10.93,-3.29) .. controls (6.95,-1.4) and (3.31,-0.3) .. (0,0) .. controls (3.31,0.3) and (6.95,1.4) .. (10.93,3.29)   ;

\draw (141,135) node [anchor=north west][inner sep=0.75pt]  [color={rgb, 255:red, 74; green, 144; blue, 226 }  ,opacity=1 ] [align=left] {$\displaystyle \mathcal{H}_{n}$};

\end{tikzpicture}
\end{minipage}
    \caption{Tadpole (left) with sequences of subgraphs $\HGraph_n$ (top and bottom right). The sequence of subgraphs top right does not converge topologically towards the original graph, since in the limit graph $\Graph$ the boundary points disappear and condition (2) is not satisfied. The sequence of subgraphs bottom right does converge topologically towards the interval (solid blue line). }
    \label{fig:vanishing_boundary}
\end{figure}

We can now give a topological notion of convergence.

\begin{definition}
\label{def:graph-convergence}
Let $\HGraph_n$, $\HGraph$ be subgraphs of a fixed graph $\Graph$. Then we will say that $\HGraph_n \to \HGraph$ topologically if the following two conditions are satisfied:
\begin{enumerate}
    \item For all $v \in \partial\HGraph$ there exist $v_n \in \partial \HGraph_n$ such that $v_n \to v$ in the metric space $\Graph$;
    \item Whenever $v_n \in \partial\HGraph_n$ is a bounded sequence of non-vanishing boundary points of $\HGraph_n$ in $\Graph$, there exists some $v \in \partial\HGraph$ such that, up to a subsequence, $v_n \to v$.
\end{enumerate}
\end{definition}

\begin{example}
Consider a tadpole (a loop attached to a pendant interval) and suppose $\HGraph_n $ is a sequence of three-stars whose boundary points approach each other as $n\to \infty$ (see Figure~\ref{fig:vanishing_boundary}), then the sequence of boundary points is non-vanishing, but the sequence of subgraphs does not converge towards the original graph in the topological sense. While property (1) holds, property (2) is not satisfied.
\end{example}

To this we now add a suitable notion of \emph{form convergence}. This will be a version of \emph{Mosco convergence} for the spaces $H^1_0 (\HGraph_n)$ and $H^1_0 (\HGraph)$ (as defined in \eqref{eq:h10}, cf.\ \eqref{eq:h1}). As customary, we will regard all these spaces as being subspaces of $H^1(\Graph)$ via extension by zero of the functions. We will show below that many nested subgraphs $\HGraph_n$ and $\HGraph$ satisfy Mosco convergence in this sense (see Lemma~\ref{lem:hgraph-mosco-convergence}); this will, in particular, allow us to treat balls and exteriors of balls.

\begin{definition}
\label{def:mosco}
Given subgraphs $\HGraph_n$ and $\HGraph$ of $\Graph$, we say that $\HGraph_n \to \HGraph$ \emph{in the sense of Mosco} if the following two conditions are satisfied:
\begin{enumerate}
    \item For all $u \in H^1_0 (\HGraph)$ there exist $u_n \in H^1_0 (\HGraph)$ such that $u_n \to u$ in $H^1(\Graph)$;
    \item For all $u_n \in H^1_0 (\HGraph_n)$ such that $u_n \rightharpoonup u$ for some $u \in H^1(\HGraph)$, we have $u \in H^1_0 (\HGraph)$. 
\end{enumerate}
\end{definition}

Our first result states that convergence in the sense of Mosco, as just defined, implies convergence of the infimum of the spectrum.

\begin{lemma}
\label{lem:hgraph-ev-convergence}
Let $\HGraph$ and $\HGraph_n$ be subgraphs of a given graph $\Graph$ which agree outside a compact set, i.e., we assume there exists some bounded set $\mathcal{K} \subset \Graph$ such that $\HGraph \setminus \mathcal{K} = \HGraph_n \setminus \mathcal{K}$. Suppose that $\HGraph_n$ and $\HGraph$ satisfy condition (1) (respectively, (2)) of Definition~\ref{def:mosco}. Then
\begin{displaymath}
	\infspec (\HGraph) \geq \limsup_{n\to\infty} \infspec (\HGraph_n)
\end{displaymath}
(respectively, $\infspec (\HGraph) \leq \liminf_{n\to\infty} \infspec (\HGraph_n)$). In particular, if $\HGraph_n \to \HGraph$ in the sense of Mosco, then
\begin{displaymath}
    \infspec (\HGraph_n) \to \infspec (\HGraph).
\end{displaymath}
\end{lemma}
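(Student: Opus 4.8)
The plan is to prove the two one-sided estimates separately, since each follows from the corresponding half of the Mosco hypothesis, and then to combine them for the final ``in particular'' statement. Throughout I write $\RQ[u]=a(u)/\|u\|_{L^2(\Graph)}^2$ and recall that $\infspec(\HGraph)=\inf_{0\neq u\in D_\HGraph(a)}\RQ[u]$, and I fix a point $x_0\in\Graph$ with $\mathcal{K}\subset B_R(x_0)$ for a suitable $R$.

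First I would dispose of the easy inequality $\infspec(\HGraph)\geq\limsup_n\infspec(\HGraph_n)$ coming from condition (1). Fix any $0\neq u\in D_\HGraph(a)$ and take a recovery sequence $u_n\in D_{\HGraph_n}(a)$ with $u_n\to u$ in $D(a)$; since the $D(a)$-norm controls both $a(\cdot)$ and $\|\cdot\|_{L^2}$, the $u_n$ are eventually nonzero and $\RQ[u_n]\to\RQ[u]$. As $\infspec(\HGraph_n)\leq\RQ[u_n]$, passing to the limsup gives $\limsup_n\infspec(\HGraph_n)\leq\RQ[u]$, and taking the infimum over $u$ yields the claim. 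This step uses neither compactness nor the hypothesis that the subgraphs agree outside a compact set.

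The substance is the reverse inequality $\infspec(\HGraph)\leq\liminf_n\infspec(\HGraph_n)=:L$ from condition (2). Passing to a subsequence realising $L$, I would choose near-minimisers $u_n\in D_{\HGraph_n}(a)$ with $\|u_n\|_{L^2}=1$ and $a(u_n)\to L$; these are bounded in $D(a)$, so after a further subsequence $u_n\rightharpoonup u$ weakly in $D(a)$, and condition (2) gives $u\in D_\HGraph(a)$. The difficulty, and the only place escape of mass could spoil the argument, is that a priori $\|u\|_{L^2}$ may be strictly less than $1$. To handle this I would introduce an IMS partition of unity $\psi,\tilde\psi$ (continuous, edgewise smooth, $\psi^2+\tilde\psi^2\equiv 1$) with $\psi\equiv 1$ on $B_R(x_0)$ and $\tilde\psi$ supported in $\Graph\setminus B_R(x_0)$, transitioning over an annulus of width $T$ so that $\|\psi'\|_\infty,\|\tilde\psi'\|_\infty\leq C/T$. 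The key structural observation is that $\tilde\psi u_n$ is supported in $\Graph\setminus\mathcal{K}$, where $\HGraph_n$ and $\HGraph$ coincide, so $\tilde\psi u_n\in D_\HGraph(a)$ and hence $a(\tilde\psi u_n)\geq\infspec(\HGraph)\|\tilde\psi u_n\|^2$; likewise $\psi u,\tilde\psi u\in D_\HGraph(a)$. Applying the IMS formula to $\psi,\tilde\psi$ and using that $\psi,\psi',\tilde\psi'$ are compactly supported, so that the compact embedding forces strong $L^2$ convergence of $\psi u_n$ (whence the tail mass $\|\tilde\psi u_n\|^2\to 1-\|\psi u\|^2$) and of the cut-off gradient terms, I would obtain, by weak lower semicontinuity of $a$ on the inner piece,
\[
L+G\;=\;\lim_n\big[a(\psi u_n)+a(\tilde\psi u_n)\big]\;\geq\;a(\psi u)+\infspec(\HGraph)\big(1-\|\psi u\|^2\big)\;\geq\;\infspec(\HGraph),
\]
where $G:=\||\psi'|u\|^2+\||\tilde\psi'|u\|^2\leq 2C^2/T^2$ and the last step uses $a(\psi u)\geq\infspec(\HGraph)\|\psi u\|^2$. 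Letting $T\to\infty$ kills $G$ and yields $L\geq\infspec(\HGraph)$.

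Finally, for the ``in particular'' clause, Mosco convergence supplies both conditions, so the two estimates combine to $\limsup_n\infspec(\HGraph_n)\leq\infspec(\HGraph)\leq\liminf_n\infspec(\HGraph_n)$, forcing $\infspec(\HGraph_n)\to\infspec(\HGraph)$. I expect the main obstacle to be the reverse inequality, and within it the bookkeeping that simultaneously controls the cross gradient terms (via the width-$T$ cut-off) and prevents loss of mass at infinity; the decisive point making this work is that the ``outer'' piece $\tilde\psi u_n$ is a legitimate competitor for $\infspec(\HGraph)$ precisely because the graphs agree outside the compact set $\mathcal{K}$.
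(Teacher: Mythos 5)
Your proposal is correct, and its skeleton matches the paper's: the easy inequality via recovery sequences from condition (1) is identical, and for the hard inequality both arguments take $L^2$-normalized near-minimizers $u_n$, extract a weak limit $u \in D_\HGraph(a)$ via condition (2), split with an IMS partition of unity $\psi^2+\tilde\psi^2\equiv 1$, use weak lower semicontinuity of $a$ on the inner piece (justified by local compact embedding), and — the decisive shared point — use the hypothesis that $\HGraph_n$ and $\HGraph$ agree outside $\mathcal K$ to make the outer piece $\tilde\psi u_n$ an admissible competitor for $\infspec(\HGraph)$. The difference is in the technical execution of the hard direction, and yours is arguably tidier. The paper uses cut-offs whose transition width grows with $n$ (so the IMS gradient error is $O(1/n^2)$ and vanishes in the single limit $n\to\infty$), but then must run a case analysis: whether some mass stays in a bounded region or all of it escapes, and within the first case whether $\|u\|_{L^2}=1$ or $0<\|u\|_{L^2}<1$, the intermediate case being handled by comparing $\infspec(\HGraph_n)$ with $\min\{\RQ(\psi_n u_n),\RQ(\tilde\psi_n u_n)\}$. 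You instead fix the transition width $T$, accept a persistent error $G\leq 2C^2/T^2$, and fold everything into the single inequality $L+G\geq a(\psi u)+\infspec(\HGraph)\bigl(1-\|\psi u\|^2\bigr)\geq\infspec(\HGraph)$, which handles partial escape, total escape ($u=0$), and no escape uniformly, at the price of a second limit $T\to\infty$. So: the paper trades a double limit for a case distinction; you trade the case distinction for a double limit. One small remark applying to both proofs equally: condition (2) of Assumption~\ref{ass:topo-implies-mosco} is invoked for a weakly convergent \emph{subsequence} of $(u_n)$, whereas the assumption is phrased for the full sequence; this is harmless here (and the paper does exactly the same), since the verifications of condition (2) in the appendix are insensitive to passing to subsequences, but it is worth being aware that you are using the subsequence form of the Mosco condition.
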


The condition that $\HGraph$ and $\HGraph_n$ coincide outside a compact set will only be used in case (2).

\begin{proof}
Assuming (1), fix $\varepsilon > 0$ and $u \in H^1_0(\HGraph)$ such that $\RQ (u) \leq \infspec (\HGraph) + \varepsilon$. Then we can find $u_n \in H^1_0(\HGraph_n)$ such that we have $u_n \to u$ in $H^1(\Graph)$; in particular, $\RQ (u_n) \to \RQ (u)$. It follows immediately that $\limsup_{n\to\infty} \infspec(\HGraph_n) \leq \infspec (\HGraph) +\varepsilon$.

In case (2), suppose $\limsup_{n\to \infty}\infspec(\HGraph_n)<\infty $ since there is nothing to show otherwise. Fix $\varepsilon>0$ and for each $n$ let $u_n\in H^1_0 (\HGraph_n)$ such that $\RQ (u_n) \le  \infspec(\HGraph_n)+\varepsilon$ for all $n$. Hence, by our assumption on $\infspec(\HGraph_n)$, $(u_n)$ is a bounded sequence in $H^1(\Graph)$; in particular, up to a subsequence it admits a weak limit $u\in H^1(\Graph)$; by (2), $u\in H^1_0(\HGraph)$.

Fix an arbitrary root vertex $0 \in \HGraph$ and let
\begin{equation}\label{eq:seqparuni}
    \begin{aligned}
    \psi_n &= \frac{\frac{1}{n}\max\{ \dist_\Graph(\Graph\setminus B_{2n}(0), x), n\}}{\big[(\frac{1}{n}\max\{ \dist_\Graph(\Graph\setminus B_{2n}(0), x), n\})^2+ (1-\frac{1}{n}\max\{ \dist_\Graph(\Graph\setminus B_{2n}(0), x), n\})^2\big]^{1/2}}, \\
    \tilde \psi_n &= \frac{1-\frac{1}{n}\max\{ \dist_\Graph(\Graph\setminus B_{2n}(0), x), n\}}{\big[(\max\{ \dist_\Graph(\frac{1}{n}\Graph\setminus B_{2n}(0), x), n\})^2+ (1-\frac{1}{n}\max\{ \dist_\Graph(\Graph\setminus B_{2n}(0), x), n\})^2\big]^{1/2}}.
    \end{aligned}
\end{equation}
Then $\psi_n, \tilde \psi_n$ are continuous, piecewise smooth cut-off functions $\psi_n,\tilde\psi_n$ such that $0 \leq \psi_n,\tilde\psi_n \leq 1$, $\psi_n^2 + \tilde\psi_n^2 \equiv 1$, $\supp \psi_n \subset \HGraph \setminus B_n(0)$ and $\supp \tilde\psi_n \subset \HGraph \cap B_{2n}(0)$. Passing to a subsequence in $u_n$ still denoted by $u_n$ we have
\begin{equation}
\label{eq:help-james-please}
\int_{\HGraph} |\psi_n u_n|^2 \, \dx \to \int_\HGraph |u|^2\, \dx .
\end{equation}
We also have
\begin{equation}
    \| \psi_n' \|_\infty \le \frac{C}{n}, \quad \|\widetilde \psi_n' \|_\infty\le \frac{C}{n} .
\end{equation}
As a consequence of \eqref{eq:ims},
$$ 
\begin{aligned}
a(u_n) &= a(\psi_n u_n) + a(\widetilde \psi_n u_n)- \| \psi_n' u_n\|^2_{L^2(\HGraph)}- \| \widetilde{\psi}_n' u_n\|^2_{L^2(\HGraph)} \\
&=a(\psi_n u_n) + a(\widetilde \psi_n u_n) + O\left ( \frac{1}{n^2}\right ) \qquad (n\to \infty). 
\end{aligned}$$

We distinguish between two cases: (i) there exists a bounded subset $\mathcal K$ such that $\|u_n\|_{L^2(\HGraph_n \cap \mathcal K)} \not\to 0$, and (ii) $\|u_n\|_{L^2(\HGraph_n\cap \mathcal K)} \to 0$ for all bounded subsets $\mathcal K$.

In case (i), either $\|u\|_{L^2(\HGraph)}^2=1$, and 
from the weak lower semicontinuity $a(u) \leq \liminf_{n\to\infty} a(u_n)$ and \eqref{eq:help-james-please} we infer
\begin{equation}
    \infspec(\mathcal H)\le\mathcal R(u) \le \liminf_{n\to \infty} \mathcal R(u_n) = \infspec(\mathcal H),
\end{equation}
or else $0<\|u\|_{L^2} <1$ and, from \eqref{eq:ims} and the asymptotic minimizing property of the $u_n$,
$$ \min\left \{\frac{a(\psi_n u_n)}{\|\psi_n u_n\|_{L^2}^2}, \frac{a(\widetilde \psi_n u_n)}{\|\widetilde\psi_n u_n\|_{L^2}^2}\right \} \lesssim 
a(u_n) \lesssim \min\left \{\frac{a(\psi_n u_n)}{\|\psi_n u_n\|_{L^2}^2}, \frac{a(\widetilde \psi_n u_n)}{\|\widetilde \psi_n u_n\|_{L^2}^2}\right \} $$
(where ``$\lesssim$'' means that ``$\leq$'' holds between the respective limits). 
In particular,
$$
    \lim_{n\to \infty} \big[\infspec(\HGraph_n) -  \min\{\RQ(\psi_n u_n), \RQ(\widetilde\psi_n u_n)\}\big] = 0.
$$
Then due to weak lower semicontinuity, $a(u) \le \liminf_{n\to \infty} a(\psi_n u_n)$, and so
\begin{equation}
    \infspec(\HGraph) \leq \RQ(u) \leq \liminf_{n\to \infty} \RQ(\psi_n u_n). 
\end{equation}
On the other hand, since $\HGraph_n$ $\HGraph$ coincide outside a compact subset $\widetilde \psi_n u_n \in H^1_0(\HGraph_n)$ for sufficiently large $n$ and hence $\infspec(\HGraph) \leq  \RQ(\widetilde \psi_n u_n)$. We infer 
$$
    \infspec(\HGraph) \le \liminf_{n\to \infty} \RQ(u_n) \leq \liminf_{n\to\infty}\, \min\{\RQ(\psi_n u_n), \RQ(\widetilde\psi_n u_n)\} = \liminf_{n\to \infty} \infspec(\HGraph_n).
$$

In case (ii), consider $\psi_n, \widetilde \psi_n$ as in case (i). Similarly, we find a subsequence of $u_n$ still denoted by $u_n$ such that
$$ \int_{\HGraph} |\psi_n u_n|^2 \, \dx \to 0$$
and hence 
$$ \int_{\HGraph} |\widetilde \psi_n u_n|^2\, \mathrm dx \to 1.$$
Hence, as a consequence of \eqref{eq:ims} we have
$$ \RQ (\widetilde \psi_n u_n) \lesssim\infspec(\HGraph_n) \lesssim  \RQ (\widetilde \psi_nu_n) $$
and since $\HGraph_n, \HGraph$ coincide outside a compact subset 
$$\infspec(\HGraph) \le \liminf_{n\to \infty} \RQ(\widetilde \psi_n u_n) = \liminf_{n\to \infty} \infspec(\HGraph_n).  $$
\end{proof}

We next give natural conditions under which $\HGraph_n \to \HGraph$ in the sense of Mosco.

\begin{lemma}
\label{lem:hgraph-mosco-convergence}
Suppose that $\HGraph_n$, $\HGraph$ are subgraphs of $\Graph$ such that:
\begin{enumerate}
    \item[(a)] $\partial \HGraph_n$ and $\partial\HGraph$ coincide except for a finite set; and
    \item[(b)] either $\HGraph_n \subset \HGraph_{n+1} \subset \HGraph$ or $\HGraph \subset \HGraph_{n+1} \subset \HGraph_n$ for all $n$.
\end{enumerate}
Then:
\begin{enumerate}[(i)]
    \item If $\HGraph_n \to \HGraph$ topologically, then $H^1_0 (\HGraph_n)$ and $H^1_0 (\HGraph)$ satisfy condition (1) of Definition~\ref{def:mosco}.
    \item If $\HGraph_n$ and $\HGraph$ satisfy condition (1) of Definition~\ref{def:graph-convergence}, then $H^1_0 (\HGraph_n)$ and $H^1_0 (\HGraph)$ satisfy condition (2) of Definition~\ref{def:mosco}.
\end{enumerate}
In particular, if under conditions (a) and (b) $\HGraph_n \to \HGraph$ topologically, then $\HGraph_n \to \HGraph$ in the sense of Mosco.
\end{lemma}

\begin{proof}
For condition (1) of Mosco convergence there is nothing to prove if $\HGraph \subset \HGraph_{n+1} \subset \HGraph_n$ for all $n$. So suppose $\HGraph_n \subset \HGraph_{n+1} \subset \HGraph$, which in particular means by condition (1) of Definition~\ref{def:graph-convergence} that $\HGraph = \bigcup_{n\in\N} \HGraph_n$. Denote by $\VertexSet_\HGraph$ the finite boundary set of vertices of $\HGraph$ which are not also boundary points of $\HGraph_n$ for all $n \in \N$, and fix $f \in H^1_0 (\HGraph)$ arbitrary. Take a sequence of cutoff functions $\phi_m$ continuous, edgewise smooth such that $\phi_m, \phi_m'\in L^\infty(\HGraph)$, $0 \leq \phi_m \leq 1$, $\supp \phi_m \subset \HGraph$, and
\begin{displaymath}
	\phi_m(x) = \begin{cases} 1 \qquad &\text{if } \dist_\Graph (x,\VertexSet_\HGraph) \geq \tfrac{2}{m},\\
	0 \qquad &\text{if } \dist_\Graph(x,\VertexSet_\HGraph) \leq \tfrac{1}{m}.
	\end{cases}
\end{displaymath}
Then $f\phi_m \in H^1(\Graph)$ for all $m \in \N$ and, by our assumptions on $\HGraph_n$, for any $n \in \N$ we claim that $\supp \phi_m \subset \HGraph_n$ for all $m=m(n)$ large enough. Indeed, suppose for some $m$ (sufficiently large) and we have $\phi_m(x)>0$ for some $x$ which is outside $\HGraph_n$ for all $n \in \N$; then in fact $x$ must be an interior point of $\HGraph$, and $\phi_m(x)=1$ for all $m$ large enough. Since $\HGraph_n \to \HGraph$, there must exist a sequence of points $x_n \in \HGraph_n$ such that $x_n \to x$ in $\Graph$; since $x \not\in \HGraph_n$ there will exist some $v_n \in \partial\HGraph_n$ in between $x$ and $x_n$ (i.e. on a shortest path between the two points), so that also $v_n \to x$. But $v_n$ is clearly non-vanishing, since $\HGraph = \bigcup_n \HGraph_n$ and $x \in \interior \HGraph$; thus, by condition (2) of Definition~\ref{def:graph-convergence} and the uniqueness of limits we have $x \in \partial\HGraph$, a contradiction. This proves the claim.

Finally, given that $f\phi_m$ and $f$ disagree on a finite number of edges, a standard argument shows that $f\phi_m \to f$ in $H^1(\Graph)$.

For condition (2) there is nothing to prove if $\HGraph_n \subset \HGraph_{n+1} \subset \HGraph$. In the other case, taking a sequence $u_n \in H^1_0 (\HGraph_n)$ weakly convergent in $H^1(\Graph)$ to $u \in H^1(\Graph)$, we first observe that compactness of the imbedding $H^1 (\widetilde\Graph) \hookrightarrow C(\widetilde\Graph)$ on any \emph{bounded} subgraph $\widetilde\Graph \subset \Graph$ implies that $u_n \to u$ locally uniformly. We next observe that necessarily $\supp u \subset \HGraph = \bigcup_n \HGraph_n$, and that, for any $v \in \partial\HGraph$, by condition (1) of Definition~\ref{def:graph-convergence} there exist $v_n \in \partial\HGraph_n$ such that $v_n \to v$. Since $u_n(v_n)=0$ for all $n \in \N$, the local uniform convergence implies $u(v) = 0$. Since $v \in \partial\HGraph$ was arbitrary, we conclude that $u \in H^1_0 (\HGraph)$.
\end{proof}

\begin{corollary}
\label{cor:hgraph-mosco-convergence}
Suppose that $\HGraph_n$, $\HGraph$ are subgraphs of $\Graph$ which agree outside a compact set, i.e. we assume there exists some bounded set $\mathcal K\subset \Graph$ such that $\HGraph\setminus \mathcal K = \HGraph_n \setminus K$. Suppose that  $\HGraph_n$ and $\HGraph$ satisfy condition (1) of Definition~\ref{def:graph-convergence}, then 
\begin{displaymath}
\lambda(\HGraph) \ge \limsup_{n\to \infty} \lambda(\HGraph_n)
\end{displaymath}
If additionally $\HGraph, \HGraph_n$ satisfy condition (2) of Definition~\ref{def:graph-convergence}, that is, $\HGraph_n \to \HGraph$ topologically, then $\HGraph_n \to \HGraph$ in the sense of Mosco, and
\begin{equation}
\lambda(\HGraph_n) \to \lambda(\HGraph).
\end{equation}
\end{corollary}
\begin{proof}
Let 
\begin{displaymath}
\begin{aligned}
    \HGraph_n^+ &= \left \{ x\in \Graph : \dist_\Graph(x,\HGraph) \le \frac{1}{n} \right \},\\
    \HGraph_n^{-} &= \left \{ x\in \HGraph : \dist_\Graph(x, \Graph \setminus \HGraph) \le \frac{1}{n} \right \},
\end{aligned}
\end{displaymath} 
then due to condition (1) every subsequence admits a further subsequence still denoted by $\HGraph_n$, such that $\HGraph_n \subset \HGraph_n^+$. Now $\HGraph_n^+\to \HGraph$ in the topological sense; by hypothesis condition (a) of Lemma \ref{lem:hgraph-mosco-convergence} is satisfied, and (b) is satisfied by construction. Thus $\HGraph_n^+ \to \HGraph_n$ in the sense of Mosco. Due domain monotonicity (cf.\ Lemma~\ref{lem:monotonicity}(1)),
\begin{equation}\label{eq:lowersemicont-1}
    \lambda(\HGraph)=\lim_{n\to \infty} \lambda(\HGraph_n^+)\le \lim_{n\to \infty} \lambda(\HGraph_n)
\end{equation}
If additionally condition (2) is satisfied, then every subsequence admits a further subsequence still denoted by $\HGraph_n$, such that $\HGraph_n^- \subset \HGraph_n$. A similar argument as above shows that $\HGraph_n^- \to \HGraph$ in the sense of Mosco, and so, by domain monotonicity,
\begin{equation}\label{eq:cont-2}
    \lim_{n\to \infty}\lambda(\HGraph_n)\le \lim_{n\to \infty} \lambda(\HGraph_n^+) =\lambda(\HGraph). 
\end{equation}
From \eqref{eq:lowersemicont-1} and \eqref{eq:cont-2} we obtain $\lambda(\HGraph_n) \to \lambda(\HGraph)$.
\end{proof}

We now focus on the particular case of balls and exteriors of balls. That is, we imagine $\HGraph$ to be a fixed subgraph of $\Graph$, $x \in \Graph$ an arbitrary root vertex (not necessarily belonging to $\HGraph$), and look at subgraphs of the form $\HGraph \cap B_R(x)$ and $\HGraph \setminus B_R(x)$ as functions of $R$. In order to apply Lemma~\ref{lem:hgraph-mosco-convergence} and Corollary~\ref{cor:hgraph-mosco-convergence}, we need to show topological convergence. This is treated in the following lemma.

\begin{lemma}
\label{lem:hgraph-topological-convergence}
Fix any subgraph $\HGraph$ of $\Graph$ and any $x \in \Graph$, as well as $R_n \to R>0$ such that $\HGraph \cap B_R (x)$ and $\HGraph \setminus B_R (x)$ have postive volume. Then the symmetric difference of the sets $\partial (\HGraph \cap B_{R_n}(x))$ and $\partial (\HGraph \cap B_R (x))$ is finite, as is the symmetric difference of $\partial (\HGraph \setminus B_{R_n}(x))$ and $\partial (\HGraph \setminus B_R (x))$. Moreover,
\begin{displaymath}
    \HGraph \setminus B_{R_n}(x) \to \HGraph \setminus B_R (x)
\end{displaymath}
topologically as $n \to \infty$, while $\HGraph \cap B_{R_n}(x)$ and $\HGraph \cap B_R (x)$ satisfy condition (1) of Definition~\ref{def:graph-convergence}.
\end{lemma}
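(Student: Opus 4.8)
The plan is to reduce everything to an explicit description of the relevant boundaries in terms of two finite ingredients: the original boundary vertices of $\HGraph$ and the ``sphere'' $S_R(x) := \{y \in \Graph : \dist_\Graph(x,y) = R\}$. First I would record that, by the finite ball condition, every ball $B_M(x)$ meets only finitely many edges, so on this finite collection $y \mapsto \dist_\Graph(x,y)$ is edgewise piecewise linear with slopes $\pm 1$; hence $S_R(x)$ is finite for every $R$, and $\partial\HGraph \cap B_M(x)$ is finite as a subset of the (finite) vertex set of the compact subgraph $B_M(x)$. A short local analysis then gives
\begin{displaymath}
	\partial(\HGraph \cap B_R(x)) \subseteq \big(\partial\HGraph \cap B_R(x)\big) \cup \big(\HGraph \cap S_R(x)\big), \quad \partial(\HGraph \setminus B_R(x)) \subseteq \big(\partial\HGraph \cap \{\dist_\Graph(x,\cdot) \ge R\}\big) \cup \big(\HGraph \cap S_R(x)\big),
\end{displaymath}
the first set being finite while the second need not be, since the far part of $\HGraph$ may carry infinitely many boundary vertices. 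Sphere points that are local maxima of $\dist_\Graph(x,\cdot)$ are isolated and discarded by convention, so they play no role.

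For the symmetric differences I would observe that in each pair the two graphs coincide except in the closed annulus $A_{r,r'}(x)$ with $r=\min(R_n,R)$, $r'=\max(R_n,R)$, together with the two finite spheres $S_{R_n}(x)$ and $S_R(x)$. Since $A_{r,r'}(x)\subset B_{r'}(x)$ is compact it contains only finitely many vertices; hence the symmetric difference of the boundaries lies in a finite set in both cases. For the intersection this is anyway immediate (each boundary is contained in a compact ball, hence finite); the real content is the complement case, where each boundary may be infinite but the two differ only in this bounded region.

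Next I would verify the topological convergence $\HGraph \setminus B_{R_n}(x) \to \HGraph \setminus B_R(x)$; write $\HGraph_n := \HGraph \setminus B_{R_n}(x)$. For condition~(1) a boundary point $v$ of the limit is either (a) an original vertex $v\in\partial\HGraph$ with $\dist_\Graph(x,v)>R$, in which case $\dist_\Graph(x,v)>R_n$ for large $n$ and $v_n:=v$ works, or (b) a cut point with $\dist_\Graph(x,v)=R$, in which case I take $v_n$ to be the point of $\HGraph$ at distance $R_n$ on an edge incident to $v$ (on the inward side if $R_n<R$, outward if $R_n>R$, using that $\HGraph$ extends beyond distance $R$ near $v$); then $v_n\in\HGraph\cap S_{R_n}(x)\subseteq\partial\HGraph_n$ and $v_n\to v$. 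For condition~(2), given a bounded \emph{non-vanishing} sequence $v_n\in\partial\HGraph_n$, compactness of $B_M(x)$ yields a subsequence with $v_n\to v$; passing to a further subsequence I may assume the $v_n$ are all original vertices (then constant along the subsequence, so $v\in\partial\HGraph$ with $\dist_\Graph(x,v)\ge R$) or all cut points (so $\dist_\Graph(x,v)=R$). In either case non-vanishing provides $r>0$ with $\liminf_n|\HGraph_n\cap B_r(v_n)|>0$, and since $R_n\to R$ and $v_n\to v$ force $|\HGraph_n\cap B_r(v_n)|\to|(\HGraph\setminus B_R(x))\cap B_r(v)|$, I conclude $|(\HGraph\setminus B_R(x))\cap B_r(v)|>0$; combined with $v$ lying on $S_R(x)$ or on $\partial\HGraph$, this yields $v\in\partial(\HGraph\setminus B_R(x))$.

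Condition~(1) for the pair $\HGraph\cap B_{R_n}(x)$, $\HGraph\cap B_R(x)$ is then proved by the verbatim argument of case~(1) above, distinguishing interior boundary vertices of $\HGraph$ (which persist, or are approximated by cut points if they sit on the sphere and $R_n<R$) from cut points on $S_R(x)$. I would emphasize that condition~(2) \emph{fails} in the intersection case — as in Example~\ref{ex:domain-non-continuity}, an expanding ball can close a cycle so that non-vanishing boundary points of $\HGraph\cap B_{R_n}(x)$ have no limit in $\partial(\HGraph\cap B_R(x))$ — which is exactly why only~(1) is asserted there. The main obstacle I anticipate is condition~(2) for the complement: the delicate step is converting the purely measure-theoretic non-vanishing hypothesis into the topological conclusion that the limit $v$ is a genuine boundary point and not an interior or isolated point of $\HGraph\setminus B_R(x)$. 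This needs the continuity of the local volumes in $(R_n,v_n)$ and careful bookkeeping of the degenerate cases (a limit that is simultaneously a sphere point and an original boundary vertex, or a sphere point at a local maximum of the distance function), which is also where the convention of discarding isolated points must be invoked.
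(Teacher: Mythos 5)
Your overall strategy coincides with the paper's: finiteness of the symmetric differences via the finite ball condition and finiteness of spheres, condition (1) by a case split between original boundary vertices of $\HGraph$ and ``cut points'' at distance exactly $R$, and condition (2) by compactness plus the non-vanishing hypothesis. Most of this is sound (your volume-continuity claim $|\HGraph_n\cap B_r(v_n)|\to|(\HGraph\setminus B_R(x))\cap B_r(v)|$ is correct, since the perturbations are confined to a thin annulus and to the symmetric difference of two balls, both of which have small volume in bounded regions). There is also a small, fixable omission in condition (1): in your case (b) for the exterior sets with $R_n<R$, the prescribed ``inward'' point need not exist in $\HGraph$ (e.g.\ if $\HGraph$ lies entirely outside $B_R(x)$ near $v$); but in that situation $v\in\partial\HGraph$ and $v_n:=v$ works, which is exactly how the paper's case (ii)/(iii) split disposes of the mixed case $v\in\partial\HGraph$ with $\dist_\Graph(v,x)=R$.

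The genuine gap is the final step of your condition (2) argument. You extract from non-vanishing a \emph{single} radius $r>0$ with $\liminf_n|\HGraph_n\cap B_r(v_n)|>0$, conclude $|(\HGraph\setminus B_R(x))\cap B_r(v)|>0$, and then assert that this, ``combined with $v$ lying on $S_R(x)$ or on $\partial\HGraph$'', yields $v\in\partial(\HGraph\setminus B_R(x))$. That implication is false: positive volume at one (possibly large) scale does not even place $v$ in the closed set $\HGraph\setminus B_R(x)$. Concretely, let $\Graph=\HGraph$ be a circle of length $2R$ through $x$ with a ray attached at $x$, and let $R_n\uparrow R$. Take $v_n$ to be a cut point on the circle at distance $R_n$ from $x$; then $v_n$ converges to the point $v$ antipodal to $x$, which lies on the sphere $\{y:\dist_\Graph(x,y)=R\}$ but is a local maximum of the distance function, hence lies in the interior of $B_R(x)$ and does not belong to $\HGraph\setminus B_R(x)$ (which is just the outer part of the ray). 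Nevertheless $|(\HGraph\setminus B_R(x))\cap B_r(v)|>0$ for every $r>2R$, so under your single-scale reading these $v_n$ are ``non-vanishing'' and your conclusion fails; this is precisely the degenerate case (``a sphere point at a local maximum of the distance function'') that you flag but do not actually dispose of. The paper's proof closes this by invoking non-vanishing at \emph{every} scale: it deduces that for any $\varepsilon>0$ one has $|B_\varepsilon(v)\cap(\HGraph\setminus B_{R_n}(x))|\not\to 0$, which together with the thin-annulus estimate gives $|(\HGraph\setminus B_R(x))\cap B_\varepsilon(v)|>0$ for all small $\varepsilon$; this forces $v$ to be a non-isolated point of $\HGraph\setminus B_R(x)$ and rules out $v\in\interior B_R(x)$, after which the boundary property follows because points just inside a geodesic from $x$ to $v$ (or points outside $\HGraph$, when $v\in\partial\HGraph$) supply nearby complement points. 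Note that in the circle example the $v_n$ \emph{are} vanishing in this all-scales sense (the little arcs have length $2(R-R_n)\to 0$), so the stronger reading of Definition~\ref{def:graph-convergence}'s non-vanishing notion is what makes the lemma true; your proof needs the same upgrade.
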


(Note that, since $\HGraph$ is a subgraph of the graph $\Graph$, which may have infinitely many edges, $\partial \HGraph$, and therefore also $\partial (\HGraph \cap B_R (x))$, need not be finite.)

\begin{proof}
First, for the finiteness of the symmetric differences: we treat the exterior case; the interior case is analogous. We may assume without loss of generality that $R \leq R_n < R + 1$ for all $n$. Then clearly $\partial (\HGraph \cap B_{R_n}(x))$ and $\partial (\HGraph \cap B_R(x))$ agree outside $B_{R+1}(x)$, where both boundaries coincide with $\partial \HGraph$ outside $B_{R+1}(x)$. Inside $B_{R+1} (x)$, our finiteness assumptions on $\Graph$, and the fact that $\HGraph$ is connected, imply that $\partial\HGraph$ is finite, as are the sets
\begin{displaymath}
\begin{aligned}
    \HGraph \cap \partial B_R (x) &= \{ v \in \HGraph: \dist_\Graph (v,x) = R\},\\ 
    \quad \HGraph \cap \partial B_{R_n} (x) &= \{ v \in \HGraph: \dist_\Graph (v,x) = R_n\}.
\end{aligned}
\end{displaymath}
This proves that the symmetric difference is always finite.

For the topological convergence, we first show that both sets satisfy Definition~\ref{def:graph-convergence}(1). Fix $R>0$; we will consider three cases:
\begin{enumerate}[(i)]
    \item $v \not\in \partial\HGraph$, so that $\dist_\Graph (v,x) = R$;
    \item $v \in \partial\HGraph$ but $\dist_\Graph (v,x) = R$;
    \item $v \in \partial\HGraph$ and $\dist_\Graph (v,x) \neq R$.
\end{enumerate}
Note in case (iii) that $v \in \partial (\HGraph \cap B_{r}(x))$ for all $r$ in a neighborhood of $R$, so that in this case we may simply take $v_n = v$.

For case (i), note that $B_\varepsilon (v) \subset \HGraph$ for $\varepsilon>0$ small enough; we assume without loss of generality that $B_\varepsilon (v)$ is a star, intersecting a finite number of edges of $\Graph$, each of which has $v$ as a vertex (in the case of loops we suppose that $\varepsilon$ is less than half the length of the shortest loop at $v$). Since $v$ is a boundary vertex between $B_R(x)$ and $\Graph \setminus B_R(x)$, there will be at least one ``incoming'' edge on which the distance to $x$ is $\leq R$, and at least one ``outgoing'' edge on which the distance is $\geq R$. Then it is immediate that for any $r \in (R-\varepsilon,R+\varepsilon)$ there exists a point $v_r \in B_\varepsilon(v)$ such that $\dist_\Graph (v_r, x) = R - \varepsilon$ and $\dist_\Graph (v_r,v) = \varepsilon$. This point is necessarily in $\HGraph \cap \partial B_{R-\varepsilon}(x)$; the claim of the lemma in this case follows.

We finally consider case (ii). Here $v \in \partial\HGraph \subset \HGraph$ will remain a boundary point of $\HGraph \cap B_r(x)$ for $r>R$ (as a boundary point of $\partial\HGraph$ contained in $\HGraph \cap B_r(x)$), and of $\HGraph \setminus B_r(x)$ for $r<R$. Consider the case of $\HGraph \cap B_{R_n} (x)$ where $R_n < R$, so that $v \not\in B_{R_n} (x)$. Since $v$ is not a degree zero vertex of $\HGraph \cap B_R (x)$, in any $\varepsilon$-neighborhood of $v$ there exist points $y \in \HGraph \cap B_R (x)$; in particular, for $\varepsilon > 0$ small enough we can find $y_\varepsilon \in \HGraph$ such that $\dist_\Graph (y_\varepsilon, x) < R$ and $\dist_\Graph (y_\varepsilon,v) = \varepsilon$. We claim that $y_\varepsilon$ lies on a shortest path from $x$ to $v$: indeed, any edge incident to $v$ for on which the distance to $x$ is always less than $R$ (i.e. an ``incoming'' edge) must have this property. In particular, if we choose $\varepsilon = R - R_n$ and set $v_n := y_{R-R_n}$, then $v_n \in \HGraph$ and $\dist_\Graph (v_n,x) = R_n$. Moreover, $v_n$ is clearly not a degree zero vertex for $n$ large enough. In the other case, $\HGraph \setminus B_{R_n}(x)$ for $R_n > R$, the argument is entirely analogous.

We now show condition (2) of topological convergence for $\HGraph \setminus B_R(x)$. Let $R_n \to R$ and take any sequence of non-vanishing boundary points $v_n \in \partial (\HGraph \setminus B_{R_n}(x))$. Suppose first that $\dist_\Graph (v_n, x) > R_n$ for infinitely many $n$; then for this subsequence we necessarily have $v_n \in \partial \HGraph$. Since by assumption the sequence is $v_n$ is bounded and $\partial \HGraph$ is finite in every bounded subset of $\Graph$, a subsubsequence of $v_n$ must necessarily be constant, hence convergent to some $v \in \partial\HGraph$ at distance $\geq R$ from $x$. The non-vanishing condition on the $v_n$ implies that $v$ is not an isolated point of $\HGraph \setminus B_R(x)$.

So we may suppose that $\dist_\Graph (v_n, x) = R_n$ for all $n$, and suppose without loss of generality that $R_n \leq R+1$ for all $n$, so that $v_n \in \HGraph \cap B_{R+1}(x)$. Again note that $\{y\in \HGraph: \dist_\Graph (y,x) = R_n\}$ is not only a finite set for all $n$, but its cardinality is uniformly bounded in $n$ (e.g. by twice the number of edges of $\Graph$ which intersect $B_{R+1} (x)$). Moreover, since $\HGraph \cap B_{R+1}(x)$ is compact, up to a subsequence $v_n$ converges to some $v \in \HGraph \cap B_{R+1}(x)$. Since $\dist_\Graph (v_n,x) = R_n \to R$, clearly $\dist_\Graph (v,x) = R$. Finally, we claim that $v \in \partial B_R (x)$. If $R_n > R$, then this follows immediately from $v_n \not\in B_R (x)$; if $R_n < R$, then we need to use that the $v_n$ are non-vanishing: indeed, it follows that for any $\varepsilon>0$,
\begin{displaymath}
    \big|B_\varepsilon (v) \cap \big(\HGraph \setminus B_{R_n}(x)\big)\big| \not\to 0.
\end{displaymath}
This shows that $v$ is not in the interior of $B_R(x)$, that is, $v \in \HGraph \cap \partial B_R(x)$, and thus equally $v \in \HGraph \setminus B_R(x) \subset \partial (\HGraph \setminus B_R(x))$.
\end{proof}

We can finally prove the continuity statements of Lemmata~\ref{lem:expanding-balls-at-0} and~\ref{lem:contracting-balls-at-infinity}.

\begin{lemma}
\label{lem:domain-continuity}
Under the assumptions of Lemma~\ref{lem:hgraph-topological-convergence}, $R \mapsto \infspec (\HGraph \setminus B_R(x))$ is a continuous mapping for all $R>0$ such that $\HGraph \setminus B_R(x) \neq \emptyset$, while $R \mapsto \infspec (\HGraph \cap B_R(x))$ is lower semicontinuous.
\end{lemma}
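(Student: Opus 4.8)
The plan is to derive both statements directly from the convergence machinery already assembled, reducing the (semi)continuity of $R \mapsto \infspec(\cdot)$ to the topological convergence of the underlying subgraphs established in Lemma~\ref{lem:hgraph-topological-convergence}. In each case the claim is that for every sequence $R_n \to R$ (with $R$ such that the relevant subgraph has positive volume) the subgraphs $\HGraph \setminus B_{R_n}(x)$, respectively $\HGraph \cap B_{R_n}(x)$, converge to $\HGraph \setminus B_R(x)$, respectively $\HGraph \cap B_R(x)$, strongly enough that the corresponding infima either converge or satisfy the one-sided inequality characterizing lower semicontinuity. Fixing $x$ and $R$ throughout and assuming without loss of generality $R_n \le R+1$, I note that all subgraphs in play agree outside the compact set $B_{R+1}(x)$ (in the exterior case both restrict to $\HGraph$ there, in the interior case both are empty there), so the compact-agreement hypothesis of Lemma~\ref{lem:hgraph-ev-convergence} and Corollary~\ref{cor:hgraph-mosco-convergence} holds automatically.

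For the exterior case, Lemma~\ref{lem:hgraph-topological-convergence} supplies full topological convergence $\HGraph \setminus B_{R_n}(x) \to \HGraph \setminus B_R(x)$ in the sense of Definition~\ref{def:graph-convergence}, that is, both conditions (1) and (2), together with finiteness of the symmetric difference of the boundaries. I would then invoke Corollary~\ref{cor:hgraph-mosco-convergence}: topological convergence upgrades to Mosco convergence $D_{\HGraph \setminus B_{R_n}(x)}(a) \to D_{\HGraph \setminus B_R(x)}(a)$, and hence $\infspec(\HGraph \setminus B_{R_n}(x)) \to \infspec(\HGraph \setminus B_R(x))$. As $R_n \to R$ was arbitrary, $R \mapsto \infspec(\HGraph \setminus B_R(x))$ is continuous.

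For the interior case the same route yields strictly less, and this is precisely what forces lower — rather than full — semicontinuity. Lemma~\ref{lem:hgraph-topological-convergence} only guarantees condition (1) of Definition~\ref{def:graph-convergence} for $\HGraph \cap B_{R_n}(x)$ and $\HGraph \cap B_R(x)$, while condition (2) may genuinely fail, as the cycle of Example~\ref{ex:domain-non-continuity} shows, where an expanding ball suddenly engulfs a loop and $\infspec$ drops. By Lemma~\ref{lem:hgraph-mosco-convergence}(ii), condition (1) alone still delivers condition (2) of Assumption~\ref{ass:topo-implies-mosco}, the weak-closedness half of Mosco convergence; feeding this into the corresponding case of Lemma~\ref{lem:hgraph-ev-convergence} produces the one-sided estimate $\infspec(\HGraph \cap B_R(x)) \le \liminf_{n\to\infty} \infspec(\HGraph \cap B_{R_n}(x))$, which is exactly lower semicontinuity at $R$.

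One bookkeeping point I would address is that Lemma~\ref{lem:hgraph-mosco-convergence} is phrased under the monotone nesting hypothesis (b); to reach a general sequence $R_n \to R$ I would either pass to the monotone subsequence realizing the relevant $\liminf$ (every sequence tending to $R$ admits a monotone subsequence, and semicontinuity is inherited from it), or, more cleanly, cite Corollary~\ref{cor:hgraph-mosco-convergence} directly, whose proof already sandwiches $\HGraph_n$ between the monotone inner and outer approximants $\HGraph_n^{\pm}$ and so dispenses with monotonicity. I expect the only genuine subtlety to be keeping the direction of the inequality straight in the interior case — tracking that the absence of condition (2) destroys the recovery-sequence ($\limsup$) bound but preserves the weak-closedness ($\liminf$) bound — a point the behaviour in Example~\ref{ex:domain-non-continuity} confirms.
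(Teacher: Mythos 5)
Your proposal is correct and follows essentially the same route as the paper's own proof: Lemma~\ref{lem:hgraph-topological-convergence} supplies topological convergence (both conditions for $\HGraph \setminus B_{R_n}(x)$, only condition (1) of Definition~\ref{def:graph-convergence} for $\HGraph \cap B_{R_n}(x)$), Lemma~\ref{lem:hgraph-mosco-convergence} (resp.\ Corollary~\ref{cor:hgraph-mosco-convergence}) upgrades this to the corresponding halves of Mosco convergence, and Lemma~\ref{lem:hgraph-ev-convergence} converts that into convergence of $\infspec$ in the exterior case and the $\liminf$ inequality, i.e.\ lower semicontinuity, in the interior case. Your explicit treatment of the monotone-nesting hypothesis (b) of Lemma~\ref{lem:hgraph-mosco-convergence} --- reducing a general sequence $R_n \to R$ to monotone subsequences, or bypassing it via the sandwich argument of Corollary~\ref{cor:hgraph-mosco-convergence} --- is a bookkeeping point the paper's proof passes over in silence, so your write-up is, if anything, slightly more careful.
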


\begin{proof}
First note that conditions (a) and (b) of Lemma~\ref{lem:hgraph-mosco-convergence} are satisfied for sets of the form $\HGraph \cap B_{R_n}(x)$ and $\HGraph \setminus B_{R_n}(x)$ under the assumptions of Lemma~\ref{lem:hgraph-topological-convergence}, as long as either $R_n \uparrow R$, or $R_n \downarrow R$; moreover,  $\HGraph \cap B_R(x)$ and $\HGraph \cap B_{R_n}(x)$ clearly agree outside a compact subset of $\Graph$.

By Lemma~\ref{lem:hgraph-topological-convergence}, $\HGraph \setminus B_{R_n}(x) \to \HGraph \setminus B_R(x)$ topologically, while $\HGraph \cap B_R(x)$ and $\HGraph \cap B_{R_n}(x)$ satisfy condition (1) of Definition~\ref{def:graph-convergence}. We now apply Lemma~\ref{lem:hgraph-mosco-convergence}, if necessary dividing into subsequences for which $R_n \uparrow R$ and $R_n \downarrow R$, respectively, to obtain $\HGraph \setminus B_{R_n}(x) \to \HGraph \setminus B_R(x)$ in the sense of Mosco, while $H^1_0(\HGraph \cap B_R(x))$ and $H^1_0(\HGraph \cap B_{R_n}(x))$ satisfy condition (2) of Definition~\ref{def:mosco}.

Hence Lemma~\ref{lem:hgraph-ev-convergence} implies that $\infspec(\HGraph \setminus B_{R_n}(x)) \to \infspec(\HGraph \setminus B_R(x))$, while $\infspec (\HGraph \cap B_R(x)) \leq \liminf_{n\to\infty} \infspec (\HGraph \cap B_{R_n}(x))$, that is, $R \mapsto \infspec (\HGraph \cap B_R(x))$ is lower semicontinuous.
\end{proof}

\end{document}